\documentclass[letterpaper,11pt]{article}
\usepackage{color}
\usepackage{pdflscape}

\usepackage[small]{titlesec}
\usepackage{theorem,amsmath,amssymb,amscd}
\usepackage[all,cmtip]{xy}
\usepackage{booktabs}
\usepackage[hyperfootnotes=false, colorlinks, linkcolor={blue}, citecolor={magenta}, filecolor={blue}, urlcolor={blue}]{hyperref}
\usepackage[overload]{textcase} % Suppresses automatic capitalization of math code in page headers

\setlength{\oddsidemargin}{0.2cm}
\setlength{\evensidemargin}{0.2cm}
\setlength{\textwidth}{16.0cm}
\setlength{\topmargin}{0.2cm}
\setlength{\textheight}{21.5cm}

\pagestyle{headings}
\theoremstyle{change}
\allowdisplaybreaks
\nonfrenchspacing

\newcommand{\Q}{{\mathbb Q}}
\newcommand{\Z}{{\mathbb Z}}

\newcommand{\C}{{\mathbb C}}

\newcommand{\p}{\mathfrak p}
\newcommand{\OF}{{\mathfrak o}}
\newcommand{\Fq}{{\mathbb F}_q}
\newcommand{\GL}{{\rm GL}}

\newcommand{\SL}{{\rm SL}}

\newcommand{\GSp}{{\rm GSp}}

\newcommand{\Sp}{{\rm Sp}}

\newcommand{\K}[1]{{\rm K}(\p^{#1})}

\newcommand{\Si}[1]{{\rm Si}(\p^{#1})}

\newcommand{\val}{{\rm val}}

\newcommand{\cInd}{\text{\rm c-Ind}}
\newcommand{\trace}{{\rm tr\,}}

\newcommand{\Supp}{{\rm Supp}}

\newcommand{\qed}{\hspace*{\fill}\rule{1ex}{1ex}}
\newcommand{\forget}[1]{}
\def\qdots{\mathinner{\mkern1mu\raise0pt\vbox{\kern7pt\hbox{.}}\mkern2mu
\raise3.4pt\hbox{.}\mkern2mu\raise7pt\hbox{.}\mkern1mu}}

\newenvironment{proof}{\vspace{1ex}\noindent\emph{Proof.}\hspace{0.5em}}
	{\hfill\qed\vspace{2ex}}
\newenvironment{bsmallmatrix}{\left[\begin{smallmatrix}}{\end{smallmatrix}\right]}

\newtheorem{lemma}{Lemma.}[section]
\newtheorem{theorem}[lemma]{Theorem.}
\newtheorem{corollary}[lemma]{Corollary.}

\newtheorem{remark}[lemma]{Remark.}

\begin{document}

\thispagestyle{empty}

\begin{center}
 {\bf\Large Siegel Vectors for Nongeneric Depth Zero Supercuspidals of $\GSp(4)$}

 \vspace{3ex}
 Jonathan Cohen
 
 \vspace{3ex}
 \begin{minipage}{80ex}
  \small\textbf{Abstract.} Let $F$ be a non-archimedean local field of characteristic zero. If $F$ has even residual characteristic, we assume $F/\Q_2$ is unramified. Let $V$ be a depth zero, irreducible, nongeneric supercuspidal representation of $\GSp(4, F)$. We calculate the dimensions of the spaces of Siegel-invariant vectors in $V$ of level $\p^n$ for all $n\geq0$. 
 \end{minipage}
 \vspace{3ex}
\end{center}

\section{Introduction}

This paper is concerned with a dimension counting problem in $p$-adic representation theory for the group $\GSp(4)$. Part of the motivation comes from the following problem in the classical theory of Siegel modular forms. If $\Gamma\subset \Sp(4, \Q)$ is a congruence subgroup, then the dimension of the space of cusp forms of level $\Gamma$ (and integer weight $k$, say) is not known in general. For example, if $$\Gamma=\begin{bmatrix}
\Z & \Z & \Z & \Z \\
\Z & \Z & \Z & \Z \\
p^n\Z & p^n\Z & \Z & \Z \\
p^n\Z & p^n\Z & \Z & \Z 
\end{bmatrix}\cap \Sp(4, \Z)$$ is the ``Siegel congruence subgroup of level $p^n$'' then the associated dimensions are unknown if $n\geq 2$, for any prime $p$. On the other hand, if $$\Gamma=\begin{bmatrix}
\Z & 4\Z & \Z & \Z \\
\Z & \Z & \Z & \Z \\
\Z & 4\Z & \Z & \Z \\
4\Z & 4\Z & 4\Z & \Z 
\end{bmatrix}\cap \Sp(4, \Z)$$ is the ``Klingen congruence subgroup of level $4$'' then the associated dimensions were recently computed in \cite{RSY2022}. The method by which this was achieved required, as one of its several inputs, the  dimensions of spaces of fixed vectors in all irreducible smooth representations of $\GSp(4, \Q_2)$ for the subgroups $${\rm Kl}(4)=\begin{bmatrix}
\Z_2 & \Z_2 & \Z_2 & \Z_2 \\
4\Z_2 & \Z_2 & \Z_2 & \Z_2 \\
4\Z_2 & \Z_2 & \Z_2 & \Z_2 \\
4\Z_2 & 4\Z_2 & 4\Z_2 & \Z 
\end{bmatrix}\cap \GSp(4, \Z_2).$$ These dimensions, and more, had been computed in \cite{Yi2021}. We remark that the different ``shape'' of the subgroups in question is an artifact of different conventions for alternating forms in the classical and representation-theoretic contexts. 

If one hopes to use the approach taken in \cite{RSY2022} for other cases, such as the Siegel congruence subgroups of level $p^n$, then a necessary component is the determination of dimensions of spaces of fixed vectors for all irreducible smooth representations of $\GSp(4, \Q_p)$ for appropriate local subgroups. In this paper we are concerned with the Siegel congruence subgroups of level $p^n$ given by $${\rm Si}(p^n)=\begin{bmatrix}
\Z_p & \Z_p & \Z_p &  \Z_p \\
\Z_p & \Z_p & \Z_p &  \Z_p \\
p^n\Z_p & p^n\Z_p & \Z_p &  \Z_p \\
p^n\Z_p & p^n\Z_p & \Z_p &  \Z_p \\
\end{bmatrix}\cap \GSp(4, \Z_p).$$ At this time, computing the associated dimensions for all representations appears overly ambitious, so we restrict ourselves to the very special case of depth zero supercuspidals (constructed in the next section). These representations arise in two ways; we will consider one of them here and the other in a companion paper. The representations of $\GSp(4, \Q_p)$ considered in this paper will all be nongeneric. While these restrictions are a limitation, some interesting phenomena already arise. Perhaps the most significant of these is a strong dichotomy between the cases $p=2$ and $p>2$. 

For applications, it is also useful to understand the effect of the Atkin-Lehner involution on the space of Siegel-invariant vectors. We compute the signature of this involution for the local representations considered.

In a previous paper \cite{Co2024}, we considered the analogous question of fixed vectors for higher depth Klingen subgroups. It was shown in that paper that the representations considered here had no fixed vectors for any $n$, nor was there a parity dichotomy. Thus, in our situation, the theory of Siegel vectors is substantially richer than that of Klingen vectors.

Here is a brief outline of this paper. Section 2 sets up notation and definitions. In section 3 we reduce the dimension computation to the determination and enumeration of a certain set of double cosets, together with a collection of character table computations for a certain finite reductive group. The main result, our dimension formula, is Theorem \ref{Odd q dimension formula}. The formula for the Atkin-Lehner signature is Theorem \ref{AL signature formula}. 

 We thank Ralf Schmidt for many useful conversations in preparing this article.

\section{Setup}

Let $F$ be a finite extension of $\Q_p$, with ring of integers $\OF$, maximal ideal $\p$, uniformizer $\varpi$, and residue field $\Fq$. We assume that if $p=2$ then $F/\Q_2$ is unramified. Let $e=\val_F(2)\in \{0, 1\}$ and $$\GL_{2,2}(q):=\left\{  (g, h)\in \GL_2(q)^2 : \det(g)=\det(h)  \right\}.$$ Let  $J = \begin{bsmallmatrix}
& & & 1\\
& & 1 & \\
 &-1 & & \\
 -1 & & & 
\end{bsmallmatrix}$, $G=\GSp(4, F)=\{ g\in \GL(4, F): {^t}gJg=\mu(g)J  \}$, $Z=Z(G)$, $T\subset G$ the set of diagonal matrices, and $G^0=\{g\in G: \mu(g)\in \OF^\times  \}$. We will need the two maximal compact subgroups $K=\GSp(4, \OF)$ and $$\K{}=\begin{bsmallmatrix}
\OF & \OF & \OF &  \p^{-1} \\
\p & \OF & \OF &  \OF \\
\p & \OF & \OF &  \OF \\
\p & \p & \p &  \OF 
\end{bsmallmatrix}\cap G^0$$ as well as the standard Iwahori subgroup $$I=\begin{bsmallmatrix}
\OF^\times & \OF & \OF &  \OF \\
\p & \OF^\times & \OF &  \OF \\
\p & \p & \OF^\times &  \OF \\
\p & \p & \p &  \OF^\times 
\end{bsmallmatrix}\cap G$$ and the sequence of Siegel congruence subgroups $$\Si{n}=\begin{bsmallmatrix}
\OF & \OF & \OF &  \OF \\
\OF & \OF & \OF &  \OF \\
\p^n & \p^n & \OF &  \OF \\
\p^n & \p^n & \OF &  \OF 
\end{bsmallmatrix}\cap G^0$$ where $n$ is a positive integer. Let $\K{}^+$ be the prounipotent radical of $\K{}$. Given $\varpi$, there is an  isomorphism $\K{}/\K{}^+\to \GL_{2,2}(q)$ defined by $$\begin{bsmallmatrix}
a & * & * & \varpi^{-1} b \\
* & a' & b' & * \\
* & c' & d' & * \\
\varpi c & * & * & d
\end{bsmallmatrix} \mapsto  \left( \begin{bsmallmatrix}
a & b \\ c & d
\end{bsmallmatrix} , \begin{bsmallmatrix}
a' & b' \\ c' & d'
\end{bsmallmatrix} \right) \pmod {\p}. $$  For representations $\rho_1$ and $\rho_2$ of $\GL_2(q)$, we write $[\rho_1\boxtimes \rho_2]$ for the restriction of $\rho_1\boxtimes \rho_2$ from $\GL_2(q)^2$ to $\GL_{2,2}(q)$. Every irreducible representation of $\GL_{2,2}(q)$ occurs as a subrepresentation of $[\rho_1\boxtimes \rho_2]$ for some $\rho_1, \ \rho_2$.  

Define the matrices $$
s_1 = \begin{bsmallmatrix}
& 1 & &  \\
1 &  &  & \\
& & & 1 \\
& & 1 & 
\end{bsmallmatrix}, \qquad
s_2 = \begin{bsmallmatrix}
1&  & &  \\
  &  & 1  & \\
&  -1 & &  \\
& &  & 1 
\end{bsmallmatrix}, \qquad u_n =  \begin{bsmallmatrix}
&  & 1 &  \\
  &  &   & -1 \\
\varpi^n &   & &  \\
& -\varpi^n &  &  
\end{bsmallmatrix}.$$  The images of $s_1$ and $s_2$ generate the Weyl group $W:=N_G(T)/T$, and $u_n\in N_G(\Si{n})$ is the Atkin-Lehner element. Note $u_1\in N_G(\K{})$ and represents the unique nontrivial coset in $N_G(\K{})/Z(G)\K{}$. 

For an irreducible complex representation $\sigma$ of a group, we write $\omega_\sigma$ for its central character.  If $H$ is a subgroup of $G$ and $\rho:H\to \GL(W)$ is a representation of $H$, then $\cInd_H^G(\rho)$ is the space of functions $f:G\to W$ such that $f(hg)=\rho(h)f(g)$ for all $h\in H$, $g\in G$, with $f$ being smooth and having compact support modulo $H$. This space is a smooth representation of $G$ under right translation. 

If $\pi$ is a depth zero supercuspidal representation of $G$ then $\pi$ arises in one of two ways. Either $\pi = \cInd_{ZK}^G(\rho)$ where $\rho|_K$ is an inflation if an irreducible cuspidal representation of $K/K^+=\GSp(4, q)$, or  $\pi = \cInd_{N_G(\K{})}^G (\tau)$ where $\tau$ is an irreducible representation of $N_G(\K{})$ and $\tau|_{\K{}}$ is an inflation of a cuspidal representation of $\K{}/\K{}^+=\GL_{2,2}(q)$. In the second case, $\tau|_{\K{}}$ is either $\sigma$ or $\sigma\oplus \sigma^{u_1}$, with the latter case occuring if and only if $\sigma \not\cong \sigma^{u_1}$; here $\sigma^{u_1}(g) := \sigma(u_1^{-1}gu_1)$. %Since $\pi$ is supercuspidal, it has no fixed vectors for an Iwahori subgroup, and in particular $\pi^{\Si{}}=0$. We therefore consider only fixed vectors for $\Si{n}$ with $n\geq 2$. 
We will compute $\dim \pi^{\Si{n}}$ for all such $\pi$ that come from $\K{}$. In a companion article we will handle the case of $K$.  We mention without proof that for the nongeneric $\pi$ arising from $K$, one has $\pi^{\Si{n}}=0$ for all $n\geq0$. So this article contains all the interesting cases of Siegel vectors in nongeneric depth zero supercuspidals.

\section{Siegel vectors for \texorpdfstring{$\pi = \cInd_{N_G(\K{})}^G (\tau)$}{}}

For $g\in G$ and fixed $n\geq 0$ define the objects \begin{eqnarray}
[g] &=& N_G(\K{})g\Si{n}\nonumber \\
R_g &=& ( g\Si{n}g^{-1} \cap \K{})\K{}^+/\K{}^+ \nonumber\\
\Supp(\pi) &=& \{ [g] : \sigma^{R_g}\neq 0  \} = \{ [g]: \exists f \in \pi^{\Si{n}}, \ f(g)\neq 0   \}.  \nonumber
\end{eqnarray}

Then $$\dim \pi^{\Si{n}} = \sum\limits_{[g]\in \Supp(\pi)} \dim \sigma^{R_g}.$$ We therefore require a determination of $\Supp(\pi)$ and of $\dim \sigma^{R_g}$ for $[g]\in \Supp(\pi)$. Observe that $$N_G(\K{})  \cap g\Si{n}g^{-1}   =  \K{}\cap g\Si{n}g^{-1}  $$ since $\K{}$ is the unique maximal compact subgroup of $N_G(\K{})$.

\subsection{Character table computations}

We first compute $\dim \sigma^R$ for those subgroups $R$ which will turn out to be the $R_g$ above. 

\begin{remark}
When $q$ is odd, the image of $Z(\K{})$ has index $2$ in $Z(\K{}/\K{}^+)$ with a representative of the complement given by $(I_2, -I_2)$. A priori, we only know that central characters of representations of $\K{}/\K{}^+$ must be trivial on the image of $Z(\K{})$ in order for the associated compact induction to have Siegel vectors. As a consequence of the next result, we will see that the relevant representations of $\K{}/\K{}^+$ are precisely the ones with trivial central character.  
\end{remark}

\begin{lemma} \label{character table computation lemma} Let $\rho_1$ and $\rho_2$ be irreducible cuspidal representations of $\GL_2(q)$ with $\omega_{\rho_1}\omega_{\rho_2}=1$. 
%Let $\sigma$ be a cuspidal representation of $\K{}/\K{}^+$ with central character $\omega$ trivial on the image $\{ (a I_2, a I_2): a\in \Fq^\times  \}$ of $\{a I_4: a\in \OF^\times\}= Z(\K{})$.

a) If $\sigma = [\rho_1\boxtimes \rho_2]$ and $R = \left\{\left( \begin{bsmallmatrix}
a & \\ & b
\end{bsmallmatrix}, \begin{bsmallmatrix}
c & \\ & abc^{-1}
\end{bsmallmatrix}  \right):  a,b,c\in \Fq^\times\right\}$ then $$\dim \sigma^{R} = \begin{cases}
1+\omega_{\rho_i}(-1) & \text{ if } q\equiv 1\pmod 2\\
1 & \text{ if } q\equiv 0\pmod 2
\end{cases}.$$

b) If $\sigma\subsetneq [\rho_1\boxtimes \rho_2]$ and $R = \left\{\left( \begin{bsmallmatrix}
a & \\ & b
\end{bsmallmatrix}, \begin{bsmallmatrix}
c & \\ & abc^{-1}
\end{bsmallmatrix}  \right):  a,b,c\in \Fq^\times\right\}$   then $q$ is odd and $$\dim \sigma^{R} =\frac{1+\omega_{\rho_i}(-1)}{2}= \begin{cases}
1 & \text{ if } q\equiv 3\pmod 4\\
0& \text{ if } q\equiv 1\pmod 4
\end{cases}.$$ % = \frac{1+\omega_{\rho_i}(-I_2)}{2}  

c) If $\sigma = [\rho_1\boxtimes \rho_2]$ and $R = \left\{ \left( \begin{bsmallmatrix}
      a & \\
     u  & a 
      \end{bsmallmatrix} , \begin{bsmallmatrix}
      a & \\
     u  & a
      \end{bsmallmatrix} \right): a\in \Fq^\times, u\in \Fq \right\}$ then $$\dim\sigma^{R}=q-1.$$

 d) If $q$ is even, $\sigma = [\rho_1\boxtimes \rho_2]$ and $R= \left\{ \left( \begin{bsmallmatrix}
       a & \\
      u+a(b^2+b)  & a 
       \end{bsmallmatrix} , \begin{bsmallmatrix}
       a & \\
      u  & a
       \end{bsmallmatrix} \right): a\in \Fq^\times, b, u\in \Fq \right\}$ then $$\dim\sigma^{R}=1.$$ 
\end{lemma}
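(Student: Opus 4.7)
The whole lemma fits under a single umbrella: use the orthogonality formula $\dim \sigma^R = \frac{1}{|R|}\sum_{r\in R}\chi_\sigma(r)$ together with $\chi_{[\rho_1\boxtimes\rho_2]}(g, h) = \chi_{\rho_1}(g)\chi_{\rho_2}(h)$ and the standard character values of a cuspidal irreducible $\rho$ of $\GL_2(q)$: value $(q-1)\omega_\rho(a)$ on $aI$, value $-\omega_\rho(a)$ on a non-scalar element with single eigenvalue $a$, and value zero on regular split semisimple elements. The identity $\omega_{\rho_1}\omega_{\rho_2}=1$ will collapse the $a$-sums.

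For (a), both character factors vanish off the center because $R$ sits inside the product of split tori, forcing $a = b$ and $c^2 = ab = a^2$. This equation has two solutions $c = \pm a$ when $q$ is odd but only $c = a$ when $q$ is even, so a short bookkeeping yields the formula. Part (c) is similarly a direct computation after splitting $R$ into the cases $u = 0$ and $u \neq 0$. Part (d) is the same flavor but with the wrinkle that the parameterization $(a, b, u) \mapsto R$ is $2$-to-$1$, since in characteristic $2$ the map $b \mapsto b^2+b$ has kernel $\{0, 1\}$; consequently $|R| = (q-1)q^2/2$, and I would partition the sum into the four cases determined by whether $u$ and $u+a(b^2+b)$ vanish.

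Part (b) is the main obstacle and requires Clifford theory for $\GL_{2,2}(q) \triangleleft \GL_2(q)^2$ with cyclic quotient $\Fq^\times$ realized by $(g, h) \mapsto \det g/\det h$. The plan is to show that $[\rho_1\boxtimes\rho_2]$ is reducible iff some nontrivial character $\chi$ of this quotient satisfies $\rho_1\otimes(\chi\circ\det) \cong \rho_1$ and $\rho_2\otimes(\chi^{-1}\circ\det) \cong \rho_2$; tracking central characters forces $\chi^2 = 1$, which has a nontrivial solution only when $q$ is odd, explaining the parity hypothesis and giving exactly two pieces $[\rho_1\boxtimes\rho_2] = \sigma \oplus \sigma'$ swapped by conjugation by $(g_0, 1)$ for any $g_0 \in \GL_2(q)$ of non-square determinant. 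The step most needing care is the observation that $g_0$ may be chosen diagonal, so that $(g_0, 1)$ normalizes the diagonal subgroup $R$; this produces the symmetry $\dim \sigma^R = \dim (\sigma')^R$, and halving the result of (a) then gives (b).
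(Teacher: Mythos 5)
Your approach to (a), (c), (d) is the paper's own: compute $\dim\sigma^R = |R|^{-1}\sum_{r\in R}\chi_\sigma(r)$ from the cuspidal character values; the bookkeeping you describe (forcing $c=\pm a$, splitting $u=0$ from $u\neq 0$, the two-to-one count of $b^2+b$) matches the paper's computations.

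For (b) you take a genuinely different route. The paper restricts $\rho_i$ to $\SL_2(q)$, identifies the two equidimensional pieces $\sigma_\pm$, tracks which tensor combination $\sigma$ is, and re-runs the trace sum directly over $R$. You instead argue by Clifford theory for $\GL_{2,2}(q)\triangleleft\GL_2(q)^2$: reducibility gives exactly two constituents swapped by $(g_0,1)$ with $\det g_0$ a nonsquare, and the observation that $g_0$ may be taken diagonal (hence $(g_0,1)$ centralizes $R$) forces $\dim\sigma^R = \dim(\sigma')^R$, so each is half of the answer from (a). That is a clean and correct argument for the \emph{first} equality in (b), and is shorter than what the paper does.

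However there is a gap: you do not prove the \emph{second} equality, $\tfrac{1+\omega_{\rho_i}(-1)}{2} = 1$ or $0$ according as $q\equiv 3$ or $1\pmod 4$. This is not a restatement of the first equality; it requires establishing that the reducibility hypothesis forces $\omega_{\rho_i}(-1) = (-1)^{(q-3)/2}$, i.e.\ that $\omega_{\rho_i}$ restricted to $\pm 1$ is determined by $q\bmod 4$. A priori, for a general cuspidal $\rho$ of $\GL_2(q)$ with $q$ odd, $\omega_\rho(-1)$ can be either sign; it is the extra condition $\rho\otimes(\alpha\circ\det)\cong\rho$ ($\alpha$ the quadratic character) that pins it down. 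Concretely, if $\rho$ corresponds to a regular character $\Lambda$ of $\mathbb{F}_{q^2}^\times$, the twist condition is $\Lambda^{q-1}=\alpha\circ N$, and then with $t\in\mathbb{F}_{q^2}^\times$ satisfying $t^{q-1}=-1$ one computes $\omega_\rho(-1)=\Lambda(-1)=\Lambda(t^{q-1})=\alpha(t^{q+1})=-\alpha(-1)=(-1)^{(q-3)/2}$. Your Clifford-theoretic setup already isolates exactly this twist condition, so the missing step slots in naturally, but it does need to be said.
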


\begin{proof}
a) Note $\omega_{\rho_1}(a)\omega_{\rho_2}(a)=1$ so $\omega_{\rho_1}(-1)=\omega_{\rho_2}(-1)$. Let $(t_1, t_2)\in R$. Since the $\rho_i$ are cuspidal representations of $\GL_2(q)$, the character table gives that $\dim(\rho_i)=q-1$ and $\trace \rho_i(t_j)=0$ if $t_j$ is nonscalar. Thus the only contributions come from elements of the form $(aI_2, aI_2)$ and, if $q$ is odd, $(aI_2, -aI_2)$. 
So if $q$ is odd then

\begin{eqnarray}
\dim \sigma^{R} &=& |R|^{-1}\sum_{(t_1, t_2)\in R} \trace \rho_1(t_1)\trace\rho_2(t_2) \nonumber \\ 
&=& (q-1)^{-3} \sum\limits_{a\in \Fq^\times} (\dim \rho_1) (\dim \rho_2) \omega_{\rho_1}(a)\omega_{\rho_2}(a)(1+\omega_{\rho_2}(-1))  \nonumber \\
&=& 1+\omega_{\rho_2}(-1). \nonumber
\end{eqnarray}   If $q$ is even then the same computation shows that $\dim \sigma^{R}=1$.

b) This case occurs if and only if $q$ is odd (see \cite{Ros2016}, Lemma A.6) and both $\rho_i$ have reducible restriction to $\SL_2(q)$. %corresponds to a character $\Lambda_i:\mathbb{F}_{q^2}^\times \to \C^\times$ with $\Lambda_1^{q-1}=\Lambda_2^{q-1} $ equals the nontrivial quadratic character of $\mathbb{F}_{q^2}^\times$. 
There then exist equidimensional irreducible representations $\sigma_\pm$ of $\SL_2(q)$ with ${\rho_1}|_{\SL_2(q)} = {\rho_2}|_{\SL_2(q)}=\sigma_+ \oplus \sigma_-$. The character table of $\SL_2(q)$ gives $\trace \sigma_\pm \left( \begin{bsmallmatrix}
t & \\ & t^{-1}
\end{bsmallmatrix} \right) =0$ if $t\neq \pm1$. % and $\trace \sigma_+(\pm I_2)=\trace \sigma_-(\pm I_2)= \frac{q-1}{2}\Lambda_0(\pm1)$. 
If $x\in \Fq^\times$ is not a square then $\rho_i\left(\begin{bsmallmatrix}
x & \\
& 1 
\end{bsmallmatrix}\right)$ interchanges $\sigma_+$ and $\sigma_-$. It follows that $$\sigma|_{\SL_2(q)^2} \in \{ \sigma_+\boxtimes \sigma_+ \oplus \sigma_- \boxtimes \sigma_-, \   \sigma_+\boxtimes \sigma_- \oplus \sigma_- \boxtimes \sigma_+\}.$$ %In either case, this implies that $\trace \sigma(I_2, -I_2) = \frac{(q-1)^2}{2}\Lambda_0(-1)$. 
Let $(t_1, t_2)\in R$. If $\det(t_1)=\det(t_2)$ is not a square, then $\sigma (t_1, t_2)$ interchanges the two summands, so $\trace \sigma(t_1, t_2)=0$. If $\det(t_1)=\det(t_2) =a^2$ then $(t_1, t_2) =(aI_2, aI_2) (t_1' ,t_2')$ with $\det(t_i')=1$, so $\trace \sigma(t_1, t_2)=  \trace \sigma(t_1', t_2')$ and is zero unless $t_1'=t_2' = \pm I_2$ or $t_1'=-t_2'=\pm I_2$, which implies the $t_i$ are scalar matrices. Clearly $\trace \sigma (aI_2, \pm aI_2) =  \dim \sigma \cdot \omega_\sigma(1, \pm 1) = \frac{(q-1)^2}{2}  \omega_{\rho_2}(\pm 1).$ Thus %= \frac{(q-1)^2}{2} \Lambda_2(\pm 1) $. 
 \begin{eqnarray}
\dim \sigma^{R} &=& |R|^{-1}\sum_{(t_1, t_2)\in R} \trace \sigma(t_1, t_2) \nonumber \\ 
&=& (q-1)^{-3} \sum\limits_{a\in \Fq^\times} \left( \trace \sigma(aI_2, aI_2) + \trace \sigma(aI_2, -aI_2) \right) \nonumber \\
&=& \frac{ 1+\omega_{\rho_2}(-1)}{2} . \nonumber
\end{eqnarray} Finally, we claim $\omega_{\rho_2}(-1)=(-1)^{\frac{q-3}{2}}$, from which the final conclusion follows. To see this, let $t\in \mathbb{F}_{q^2}^\times$ with $t^{q-1}=-1$ so $t^2\in \Fq^\times \setminus \Fq^{\times 2}$. The representation $\rho_2$ is associated to a character $\Lambda: \mathbb{F}_{q^2}^\times\to \C^\times$ with $\Lambda^{q-1}=\alpha\circ N$, where $N:\mathbb{F}_{q^2}^\times\to \Fq^\times$ is the norm map and $\alpha:\Fq^\times \to \{\pm 1\}$ is the nontrivial quadratic character. Thus $\omega_{\rho_2}(-1) = \Lambda(-1)= \Lambda(t^{q-1}) = \alpha(t^{q+1}) = \alpha(-1)\alpha(t^2) = -\alpha(-1) = (-1)^{\frac{q-3}{2}}$.

c) From the character table for $\GL_2(q)$ and cuspidality of $\rho_i$, we have $\trace \rho_i\left( \begin{bsmallmatrix}
a & \\ u & a
\end{bsmallmatrix}  \right)=-\omega_{\rho_i}(a)$ if $u\neq 0$. Thus \begin{eqnarray}
\dim \sigma^{R} &=& |R|^{-1}\sum_{(g_1, g_2)\in R} \trace \rho_1(g_1)\trace \rho_2(g_2) \nonumber \\ 
&=& \frac{1}{q(q-1)} \sum\limits_{a\in \Fq^\times, u\in \Fq} \trace \rho_1\left(\begin{bsmallmatrix}
      a & \\
     u  & a 
      \end{bsmallmatrix}\right)\trace \rho_2\left(\begin{bsmallmatrix}
            a & \\
           u  & a 
            \end{bsmallmatrix} \right) \nonumber \\
&=& \frac{1}{q} \left( (q-1)^2+ \sum\limits_{u\in \Fq^\times} (-1)^2\right)  \nonumber \\
&=& q-1 \nonumber.
\end{eqnarray} 

d) Note $|\{ b^2+b: b\in \Fq  \}|=\frac{q}{2}$. We have \begin{eqnarray}
\dim \sigma^{R} 
&=& |R|^{-1}\sum_{(g_1, g_2)\in R} \trace \rho_1(g_1)\trace \rho_2(g_2) \nonumber \\ 
&=& \frac{2}{q^2(q-1)} \sum\limits_{a\in \Fq^\times, u\in \Fq, b^2+b\in \Fq} \trace \rho_1\left(\begin{bsmallmatrix}
      a & \\
     u+a(b^2+b)  & a 
      \end{bsmallmatrix}\right)\trace \rho_2\left(\begin{bsmallmatrix}
            a & \\
           u  & a 
            \end{bsmallmatrix} \right) \nonumber \\
&=& \frac{2}{q^2}\left( (q-1)\sum\limits_{b^2+b\in \Fq} \trace \rho_1\left( \begin{bsmallmatrix}
1 & \\
b^2 +b & 1
\end{bsmallmatrix}   \right) -\sum\limits_{u\in \Fq^\times, b^2+b\in \Fq} \trace \rho_1\left(\begin{bsmallmatrix}
      1 & \\
     u+b^2+b  & 1 
      \end{bsmallmatrix}\right)  \right)  \nonumber \\
&=& \frac{2}{q^2} \left( (q-1)^2 - (q-1)\left(\frac{q}{2}-1\right) + (q-1) -   \sum\limits_{u\in \Fq^\times, b^2+b\in \Fq^\times} \trace \rho_1\left(\begin{bsmallmatrix}
      1 & \\
     u+b^2+b  & 1 
      \end{bsmallmatrix}\right) \right) \nonumber \\       
&=& \frac{2}{q^2} \left( (q-1)\left(\frac{q}{2}+1\right)+ (q-2)\left(\frac{q}{2}-1\right)  - (q-1) \left( \frac{q}{2}-1  \right)  \right) \nonumber   \\
&=& 1. \nonumber \end{eqnarray} This completes the proof.\end{proof}

Next we perform some computations which will be necessary to determine the signature of the Atkin-Lehner involution acting on $\pi^{\Si{n}}$. Let $w=\begin{bsmallmatrix}
0 & 1 \\-1 & 0
\end{bsmallmatrix}\in \GL_2(q)$.  Observe that the conjugation action of $u_1$ on $\K{}$ descends to an action on $\K{}/\K{}^+\cong \GL_{2,2}(q)$, which can be described as the composition (in either order) of swapping factors and conjugation by $\left(w, w\right)$. 
The conditions $\sigma = [\rho_1\boxtimes \rho_2]=\sigma^{u_1}$, and $\omega_\sigma=1$ are equivalent to the given conditions on $\sigma$ in the next lemma. 

\begin{lemma} \label{swap acts trivially on all fixed spaces}
Suppose that $\sigma= [\lambda\rho\boxtimes \rho]$ for some irreducible cuspidal representation $\rho$ of $\GL_2(q)$ and $\lambda:\Fq^\times\to \C^\times$ satisfying $\omega_\rho(-1)=1$,  and $(\lambda \omega_\rho)^2 = 1$. Let $s=s_\sigma$ be the involution acting on $\sigma$ by $s(v_1\otimes v_2)=v_2\otimes v_1$. 

a) If $q$ is even then $s$ acts trivially on $\sigma^R$ for each of the three subgroups $R$ in Lemma \ref{character table computation lemma}. 

b) If $q$ is even and $R = \left\{\left( \begin{bsmallmatrix}
a & \\ & b
\end{bsmallmatrix}, \begin{bsmallmatrix}
c & \\ & abc^{-1}
\end{bsmallmatrix}  \right):  a,b,c\in \Fq^\times\right\}$ then $\sigma(w,w)$ acts trivially on $\sigma^R$. 

c) Suppose $q$ is odd and $R = \left\{\left( \begin{bsmallmatrix}
a & \\ & b
\end{bsmallmatrix}, \begin{bsmallmatrix}
c & \\ & abc^{-1}
\end{bsmallmatrix}  \right):  a,b,c\in \Fq^\times\right\}$. If $\lambda\omega_\rho=1$ then $\sigma(w,w)$ and $s$ act trivially on $\sigma^R$. If $\lambda\omega_\rho\neq 1$ % (so is the nontrivial quadratic character). 
then $\sigma(w,w)$ acts by $(-1)^{\frac{q-3}{2}}$ and $s$ acts with signature 0. 
\end{lemma}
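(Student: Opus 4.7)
The plan is, for each involution $T \in \{s,\,\sigma(w,w)\}$ and each subgroup $R$ appearing in the statement, to compute
\[
\trace(T|_{\sigma^R}) \;=\; \trace(T\circ P_R), \qquad P_R := |R|^{-1}\sum_{r\in R} \sigma(r),
\]
and then compare the result with the dimension $\dim\sigma^R$ already recorded in Lemma \ref{character table computation lemma}. Since $T^{2} = \idm$, this trace equals $\dim(\sigma^R)_+ - \dim(\sigma^R)_-$, so pinning it down determines the signature; in particular $T$ acts as the identity iff $\trace(T|_{\sigma^R}) = \dim\sigma^R$. The two computational workhorses are the basis identity $\trace(s\cdot(A\otimes B)) = \trace(AB)$, which gives
\[
\trace\bigl(s\,\sigma(r_1,r_2)\bigr) \;=\; \lambda(\det r_1)\,\trace\rho(r_1 r_2),
\]
and the tautology $\sigma(w,w)\sigma(r_1,r_2) = \sigma(wr_1,wr_2)$, whose trace equals $\lambda(\det r_1)\,\trace\rho(wr_1)\,\trace\rho(wr_2)$.

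For part a), in characteristic $2$ the hypothesis $(\lambda\omega_\rho)^{2} = 1$ forces $\lambda\omega_\rho = \idm$ because $|\Fq^\times|$ is odd. For each of the three subgroups $R$ from Lemma \ref{character table computation lemma}, direct multiplication puts $r_1 r_2$ into the scalar class (for $R$ from parts a and c, via the characteristic-$2$ cancellation $2au = 0$) or into a scalar-or-scalar-times-unipotent class parametrized by the additive subgroup $\{b^{2}+b\}\subset\Fq$ (for part d). The cuspidal character of $\rho$ is explicit on these classes, and the resulting sums evaluate to exactly $|R|\cdot\dim\sigma^R$, so $s$ acts as the identity.

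Part b) and the $\sigma(w,w)$-half of part c) both rest on analyzing $wr_1 = \mat{0}{b}{-a}{0}$ and $wr_2 = \mat{0}{abc^{-1}}{-c}{0}$ for diagonal $r_i$; both have characteristic polynomial $t^{2} + ab$. In characteristic $2$ this has the double root $\sqrt{ab}$, so each $wr_i$ lies in the scalar-times-unipotent class and the summed trace returns $1 = \dim\sigma^R$ for part b). In odd characteristic the class is split if $-ab \in \Fq^{\times 2}$ (cuspidal trace vanishes) and elliptic otherwise; on the elliptic class the inducing character $\Lambda$ of $\mathbb{F}_{q^2}^\times$ together with $\Lambda(-1) = \omega_\rho(-1) = 1$ yields
\[
\trace\rho(wr_i) \;=\; -\Lambda(\sqrt{-ab}) - \Lambda(-\sqrt{-ab}) \;=\; -2\Lambda(\sqrt{-ab}),
\]
so $\trace\rho(wr_1)\,\trace\rho(wr_2) = 4\Lambda(-ab) = 4\omega_\rho(ab)$.

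For the $s$-half of part c), $r_1 r_2 = \mathrm{diag}(ac,\,ab^{2}c^{-1})$ is scalar exactly when $c = \pm b$, each case contributing $(q-1)\omega_\rho(ab)$ via $\omega_\rho(-1) = 1$; this reduces $\trace(sP_R)$ to $2$ when $\lambda\omega_\rho = \idm$ (trivial action) and to $0$ when $\lambda\omega_\rho = \alpha$ is the nontrivial quadratic character (signature zero). For $\sigma(w,w)$, a square/non-square count over $(a,b,c)$ produces $\trace(\sigma(w,w) P_R) = 2$ in the case $\lambda\omega_\rho = \idm$ (trivial action) and $-2\alpha(-1)$ in the case $\lambda\omega_\rho = \alpha$; on the $2$-dimensional space $\sigma^R$ the scalar action is thus $-\alpha(-1) = (-1)^{(q-3)/2}$. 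The main obstacle is this final sign: combining the elliptic character evaluation with $\Lambda|_{\Fq^\times} = \omega_\rho$, and then transporting $-\alpha(-1) = -(-1)^{(q-1)/2}$ to $(-1)^{(q-3)/2}$ via the parity shift, requires careful bookkeeping of characters on the non-split torus.
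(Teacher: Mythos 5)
Your proposal is correct, and it takes a genuinely different route from the paper in several places. You uniformly compute $\trace\bigl(T\circ P_R\bigr)$ for $P_R=|R|^{-1}\sum_{r\in R}\sigma(r)$ and compare with $\dim\sigma^R$, using $\trace\bigl(s\,\sigma(r_1,r_2)\bigr)=\lambda(\det r_1)\trace\rho(r_1r_2)$ and $\trace\,\sigma(wr_1,wr_2)=\lambda(\det r_1)\trace\rho(wr_1)\trace\rho(wr_2)$. The paper instead argues by exhibiting explicit bases of $\sigma^R$: in part a) it uses the one-dimensional torus-fixed line $\rho^S=\C v$ and shows $v\otimes v$ is fixed by $s$ (and by $\sigma(w,w)$ since $\rho(w)v=\pm v$); for the second $R$ it uses Whittaker-type vectors $v_a$ with $\rho\begin{bsmallmatrix}1&\\u&1\end{bsmallmatrix}v_a=\psi(au)v_a$ and observes $v_a\otimes v_a$ spans $\sigma^R$ and is visibly swap-invariant; for the third $R$ it inherits the result by inclusion. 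For the $s$-half of part c) the paper passes to an explicit Kirillov-type model of $\rho$ on functions on $\Fq^\times$ and writes down the basis $f_1,f_2$ of $\sigma^R$ directly, reading off how $s$ permutes them. Only for the $\sigma(w,w)$-half of part c) does the paper use your trace computation, arriving at $\frac{4}{q-1}\sum_{a\notin-\Fq^{\times2}}(\lambda\omega_\rho)(a)$ exactly as you do. Your approach has the virtue of uniformity (a single mechanism handles every case and never needs a concrete realization of $\rho$), while the paper's explicit bases give more information, for instance identifying the specific swap-eigenvectors in $\sigma^R$, which is useful if one wants to track which vectors survive. Both are correct; the verifications you sketch (e.g.\ $2au=0$, the char poly $t^2+ab$ giving a scalar-times-unipotent class in characteristic $2$, the square/non-square count in odd characteristic, and the parity shift $-(-1)^{(q-1)/2}=(-1)^{(q-3)/2}$) all check out.
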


\begin{proof}
a), b). Since $q$ is even we may replace $\rho$ by $\lambda^{-1/2}\rho$, and so assume $\lambda=\omega_\rho=1$. 

Suppose first $R= \left\{\left( \begin{bsmallmatrix}
a & \\ & b
\end{bsmallmatrix}, \begin{bsmallmatrix}
c & \\ & abc^{-1}
\end{bsmallmatrix}  \right):  a,b,c\in \Fq^\times\right\}$. Let $S = \{ \begin{bsmallmatrix}a & \\ & b
\end{bsmallmatrix} : a, b\in \Fq^\times \}$. Since $\rho$ is cuspidal, $\trace\rho\begin{bsmallmatrix}a & \\ & b
\end{bsmallmatrix}=(q-1)\delta_{a,b}$. Thus $$\dim \rho^S = \frac{1}{(q-1)^2}\sum\limits_{a,b\in \Fq^\times}\trace \rho \begin{bsmallmatrix}a & \\ & b
\end{bsmallmatrix} = 1.$$ Let $v$ be a nonzero vector in $\rho^S$. Then clearly $v\otimes v\in \sigma^{R}$ and is fixed by $s$. Since $\rho(w)v\in V^S$, we have $\rho(w)v=\pm v$ (in fact $\rho(w)v=-v$ using the character table). Thus $\sigma(w, w)(v\otimes v) = \rho(w)v\otimes \rho(w)v=v\otimes v$. Since $\dim\sigma^R=1$ by Lemma \ref{character table computation lemma}, the conclusion follows. 

Next, suppose $R=\left\{ \left( \begin{bsmallmatrix}
            a & \\
           u  & a 
            \end{bsmallmatrix} , \begin{bsmallmatrix}
            a & \\
           u  & a
            \end{bsmallmatrix} \right): a\in \Fq^\times, u\in \Fq \right\}$. Fix a nontrivial character $\psi:\Fq\to \C^\times$, and for each $a\in \Fq^\times$ let  $v_a\in\rho$ be the vector, unique up to scalars, such that $\rho\left( \begin{bsmallmatrix}
      1 & \\ u & 1
      \end{bsmallmatrix}  \right)v_a = \psi(au)v_a$ for all $u\in \Fq$. The existence and uniqueness of $v_a$ is equivalent the genericity of cuspidal representations of $\GL_2(q)$ (and uniqueness of Whittaker models), which can be proved directly with the character table. So $ \sigma\left( \begin{bsmallmatrix}
            1 & \\ u & 1
            \end{bsmallmatrix}, \begin{bsmallmatrix}
            1 & \\ u & 1
            \end{bsmallmatrix}  \right) (v_a\otimes v_a) = \psi(au)v_a\otimes \psi(au)v_a = \psi(2au)(v_a\otimes v_a)=v_a\otimes v_a$ since $q$ is even. We have $\dim \sigma^R = q-1$ by Lemma \ref{character table computation lemma}, so one basis for $\sigma^R$ consists of the vectors $\{v_a\otimes v_a: \ a\in \Fq^\times\}$. The conclusion follows. 
            
            If $R=\left\{ \left( \begin{bsmallmatrix}
                        a & \\
                       u+a(b^2+b)  & a 
                        \end{bsmallmatrix} , \begin{bsmallmatrix}
                        a & \\
                       u  & a
                        \end{bsmallmatrix} \right): a\in \Fq^\times, b, u\in \Fq \right\}$, then $\sigma^R$ is a subspace of the one considered in the previous case, hence the conclusion follows.

   c) By Lemma \ref{character table computation lemma} we have $\dim \sigma^R=2$. We will compute the signature of $\sigma(w,w)$ on $\sigma^R$ and see that it is either $2$ or $ 2(-1)^{\frac{q-3}{2}}$, according to $\lambda\omega_\rho$ being trivial and nontrivial.  This signature equals \begin{eqnarray}
   & & \frac{1}{(q-1)^3} \sum\limits_{a,b,c\in \Fq^\times} \trace \sigma(w\begin{bsmallmatrix}
   a & \\ & b 
   \end{bsmallmatrix}  ,w\begin{bsmallmatrix}
      c & \\ & abc^{-1} 
      \end{bsmallmatrix}) \nonumber \\
      &=&  \frac{1}{(q-1)^3} \sum\limits_{a,b,c\in \Fq^\times} \lambda(ab) \trace \rho(\begin{bsmallmatrix}
          & b \\ -a &  
         \end{bsmallmatrix})\trace\rho(  \begin{bsmallmatrix}
             & abc^{-1} \\-c  &  
            \end{bsmallmatrix}) \nonumber \\
       &=& \frac{1}{(q-1)^3} \sum\limits_{a,b,c\in \Fq^\times} \lambda\left(ab^{-1}\right)(\lambda\omega_\rho)^2(b)\trace \rho\left(\begin{bsmallmatrix}
                 & 1 \\ -ab^{-1} &  
                \end{bsmallmatrix}\right)^2 \nonumber\\
   &=& \frac{1}{q-1} \sum\limits_{a\in \Fq^\times} \lambda(a) \trace \rho\left(\begin{bsmallmatrix}
                    & 1 \\ -a &  
                   \end{bsmallmatrix}\right)^2 \nonumber\\
   &=& \frac{4}{q-1}\sum\limits_{a\in \Fq^\times\setminus -\Fq^{\times 2}}\lambda(a)\omega_\rho(a).       \nonumber
   \end{eqnarray} In the second equality we used the fact that the two matrices are conjugate. The characteristic polynomial of $\begin{bsmallmatrix} & 1 \\ -a & 
   \end{bsmallmatrix}$ is $x^2+a$. From the character table of $\GL_2(q)$ we get no contribution if $-a$ is a square. Let $\theta:\mathbb{F}_{q^2}^\times\to \C^\times$ with $\theta\neq \theta^q$ correspond to $\rho$. Note $\omega_\rho = \theta|_{\Fq^\times}$. If $-a$ is a nonsquare then $\trace \rho\left(\begin{bsmallmatrix}
                       & 1 \\ -a &  
                      \end{bsmallmatrix}\right)^2=(-\theta(\sqrt{-a}) -\theta(-\sqrt{-a}))^2=4\theta(\sqrt{-a})^2 = 4\theta(-a)=4\omega_\rho(a)$. This explains the last equality. If $\lambda\omega_\rho=1$, the conclusion follows since there are $\frac{q-1}{2}$ terms in the summation. If $\lambda \omega_\rho$ is the nontrivial quadratic character and $-a$ is a nonsquare then $(\lambda\omega_\rho)(a)=(\lambda\omega_\rho)(-1)(\lambda\omega_\rho)(-a)=(-1)^{\frac{q-1}{2}}(-1) =(-1)^{\frac{q-3}{2}}$. The conclusion follows. 
   
   Finally, we consider the action of $s$. Since $q$ is odd, there is an explicit model of $\rho$ on the space $\{ f:\Fq^\times \to \C^\times  \}$ in which we have $(\rho\begin{bsmallmatrix}
   a &  \\ & b
   \end{bsmallmatrix} f)(x) = \omega_\rho(b)f(\frac{ax}{b})$; we will not need the formulas for the other operators. Therefore $\sigma$ can be realized on $\{ f: \Fq^\times \times \Fq^\times \to \C^\times  \}$,  $(sf)(x,y)=f(y,x)$, and \begin{eqnarray}
(\sigma\left( \begin{bsmallmatrix}
      a &  \\ & b
      \end{bsmallmatrix}, \begin{bsmallmatrix}
         ac &  \\ & bc^{-1}
         \end{bsmallmatrix} \right)f)(x,y) 
         &=& \lambda(ab) \omega_\rho(b^2c^{-1})f\left( ab^{-1}x, ab^{-1}c^2y \right) \nonumber \\
         &=&\lambda(ab^{-1})\omega_\rho(c^{-1})f\left( ab^{-1}x, ab^{-1}c^2y \right), \nonumber 
   \end{eqnarray} so $f$ is $R$-invariant if and only if $f(x,y)=\lambda(a)\omega_\rho(c^{-1})f(ax, ac^2 y)$. Let $\varepsilon\in \Fq^\times$ be a fixed nonsquare. For $\zeta\in \{1, \varepsilon\}$, define the functions $f_1, f_2\in \sigma$ by 
   \begin{eqnarray}
   f_1(x,x\zeta d^2) &=& 
   \lambda(x^{-1})\omega_\rho(d) \delta_{1, \zeta}
  \nonumber \\
   f_2(x,x\zeta d^2) &=&  \lambda(x^{-1})\omega_\rho(d) \delta_{\varepsilon,\zeta} .\nonumber
   \end{eqnarray} Here $\delta_{a,b}$ denotes the Kronecker delta function. It is easy to show that $f_1$ and $f_2$ lie in $\sigma^R$, hence are a basis. The reader may verify that $f_1(y,x)=f_1(x,y)$ and $f_2(y,x)= (\lambda\omega_\rho)(\varepsilon) f_2(x,y)$ for all $x,y\in \Fq^\times$. The conclusion follows. \end{proof}

\begin{lemma}
Suppose $\sigma\neq \sigma^{u_1}$ is an irreducible representation of $\K{}/\K{}^+$ with $\omega_\sigma=1$ and $\tau$ is an irreducible representation of $N_G(\K{})$ containing the inflation of $\sigma$. If $R$ is a subgroup of $\K{}/\K{}^+$ and $s\in u_1\K{}/\K{}^+$ normalizes $R$, then the trace of $\tau(s)$ on $\tau^R$ is zero. 
\end{lemma}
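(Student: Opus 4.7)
The plan is to exploit the block structure of $\tau|_{\K{}}$ forced by the hypothesis $\sigma \not\cong \sigma^{u_1}$, and to observe that any element in the nontrivial coset of $N_G(\K{})/Z(G)\K{}$ must act as an off-diagonal operator, which has trace zero on any invariant subspace that respects the block decomposition.

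First, I would invoke Clifford theory together with the hypothesis $\sigma \not\cong \sigma^{u_1}$ and the fact that $N_G(\K{})/Z(G)\K{}$ is of order two generated by the class of $u_1$, to conclude that $\tau|_{\K{}} = V_1 \oplus V_2$ as $\K{}$-modules, where $V_1 \cong \sigma$ and $V_2 \cong \sigma^{u_1}$. By construction the operator $\tau(u_1)$ is an intertwiner between these two inequivalent summands, hence sends $V_1$ onto $V_2$ and $V_2$ onto $V_1$.

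Next, write $s = u_1 k$ with $k$ (a lift to $\K{}$ of) an element of $\K{}/\K{}^+$. Since $\tau(k)$ acts diagonally on $V_1 \oplus V_2$, the product $\tau(s) = \tau(u_1)\tau(k)$ also swaps the two summands. Because $\sigma$ and $\sigma^{u_1}$ are inflated from $\K{}/\K{}^+$, the subspace $\tau^R$ decomposes as $V_1^R \oplus V_2^R$, where $V_i^R$ denotes the fixed vectors for the preimage of $R$ in $\K{}$. Using $sRs^{-1} = R$, a direct check shows $\tau(s)$ carries $V_1^R$ into $V_2^R$ and $V_2^R$ into $V_1^R$: if $v \in V_i^R$ and $r \in R$, then $\tau(r)\tau(s)v = \tau(s)\tau(s^{-1}rs)v = \tau(s)v$.

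Therefore, in any basis of $\tau^R$ adapted to the decomposition $V_1^R \oplus V_2^R$, the matrix of $\tau(s)|_{\tau^R}$ is block off-diagonal, and the trace vanishes. There is no real obstacle here; the argument is formal and requires none of the preceding character table calculations.
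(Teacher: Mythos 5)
Your proof is correct and takes essentially the same approach as the paper's: decompose $\tau|_{\K{}} = \sigma \oplus \sigma^{u_1}$ by Clifford theory, observe that $\tau(s)$ swaps the two summands and hence $\tau(s)|_{\tau^R}$ is block off-diagonal, so its trace is zero. The paper states this argument very tersely (and offers the induced character formula as an alternative); your write-up simply fills in the routine verifications.
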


\begin{proof}
We have $\tau^R = \sigma^R \oplus (\sigma{^{u_1}})^R$, so $\tau(s)$ acts by an operator of the form $\begin{bsmallmatrix}
&* \\ *& 
\end{bsmallmatrix}$. Alternatively, one may appeal to the formula for induced characters. 
\end{proof}

%Let $H \cong \K{}/\K{}^+\rtimes \Z/2\Z$ generated by $\K{}/\K{}^+$ and $u_1$. 

%If $\sigma|_{\SL_2(q)^2} = \sigma_+\boxtimes \sigma_+ \oplus \sigma_- \boxtimes \sigma_-$ we have $\trace \sigma(I_2, -I_2) = \trace \sigma_+(I_2)\trace\sigma_+(-I_2)+\trace \sigma_-(I_2)\trace\sigma_-(-I_2)=\frac{(q-1)^2}{2} \Lambda_0(-1)$. 

%If $\sigma|_{\SL_2(q)^2} = \sigma_+\boxtimes \sigma_- \oplus \sigma_- \boxtimes \sigma_+$, we have $\trace \sigma(I_2, -I_2) = \trace \sigma_+(I_2)\trace\sigma_-(-I_2)+\trace \sigma_-(I_2)\trace\sigma_+(-I_2)=\frac{(q-1)^2}{2} \Lambda_0(-1)$

%For the remainder of this article, we assume that $\sigma$ is an irreducible cuspidal representation of $\K{}/\K{}^+$ with trivial central character. 

\subsection{Determination of \texorpdfstring{$\Supp(\pi)$}{} }

In this section we obtain a parametrization of  $\Supp(\pi)$. We have $G = I N_G(T) I$ by a well-known decomposition (see \cite{Iw1965}). Since $\K{}\supset I$ and $\K{}$ contains a set of representatives for $N_G(T)/T$, %$\supset \langle t_{-1, 1}s_1, s_2\rangle$, 
using the Iwahori factorization for $I$ we may take double coset representatives for $N_G(\K{}) \backslash  G/\Si{n}$ to have the form $t_{i, j}S(x, y,z)$ where $$t_{i, j}  : = \begin{bsmallmatrix}
\varpi^{2i+j} & & & \\
 & \varpi^{i+j} & & \\
 & & \varpi^i  & \\
 & & & 1
\end{bsmallmatrix}  ,  \qquad  S(x, y,z):=\begin{bsmallmatrix}
1 &  & & \\
 & 1 & & \\
x & y & 1 & \\
z & x &  & 1
\end{bsmallmatrix}$$ with $x, y, z\in \p$. Since $$[t_{i,j}S(x, y, z)] = [t_{-1, 1}s_1t_{i,j}S(x, y, z) s_1] = [t_{-i-1, 2i+j+1} S(x, z,y )]$$ we can and do assume $i\geq 0$.  Since \begin{eqnarray}
[t_{i,j}S(x, y, z)] &=& [S(\varpi^{-i-j}x,0,0)t_{i,j}S(0, y, z)]\nonumber \\
&=& [S(0,\varpi^{-j}y,0)t_{i,j}S(x, 0, z)]\nonumber \\
&=& [S(0,0,\varpi^{-2i-j}z)t_{i,j}S(x, y, 0)]\nonumber 
\end{eqnarray}

we may assume $\val(x)\leq i+j$ or $x=0$, $\val(y)\leq j-1$ or $y=0$, and $\val(z)\leq 2i+j$ or $z=0$, respectively. We now obtain additional relations on the valuations of $x$, $y$, and $z$.

\begin{lemma} \label{first double coset inequalities}
Suppose that $i\geq 0$ and consider the double coset $[t_{i,j}S(x,y,z)]$. 

a) If $x\neq 0$ we may assume that $0 \leq i+\val(y/x)$. 

b) If $xyz\neq 0$ and $\val(z)=2i+1+\val(y)$ then we may assume that $i+\val(y/x)\leq e-1$. 
\end{lemma}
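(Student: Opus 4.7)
The plan is to exploit a one-parameter family in the Siegel Levi of $G$. For $\mu\in F$ set
$$M(\mu) = \begin{bsmallmatrix}1 & & & \\ \mu & 1 & & \\ & & 1 & \\ & & -\mu & 1\end{bsmallmatrix}.$$
Three routine checks will underpin everything: $M(\mu)\in\Si{n}$ iff $\mu\in\OF$; the conjugate $t_{i,j}M(\mu)t_{i,j}^{-1}=M(\mu\varpi^{-i})$ lies in $\K{}$ iff $\mu\in\p^{i+1}$; and the Siegel Levi action on $S$ yields
$$M(\mu)^{-1}S(x,y,z)M(\mu) = S\bigl(x+\mu y,\,y,\,z+2\mu x+\mu^2 y\bigr).$$
Putting these together, for every $\mu\in\p^{i+1}$ we obtain the legal double-coset move
$$[t_{i,j}S(x,y,z)] = [t_{i,j}S(x+\mu y,\,y,\,z+2\mu x+\mu^2 y)].$$

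For (a), I would choose $\mu$ to kill the first coordinate. Taking $\mu=-x/y$ gives $\val(\mu)=\val(x)-\val(y)$, which lies in $\p^{i+1}$ precisely when $i+\val(y/x)\leq -1$. In that regime the move produces a representative with vanishing first coordinate, so any representative with $x\neq 0$ may be chosen to satisfy $i+\val(y/x)\geq 0$.

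For (b), I would instead kill $z$ by taking $\mu$ a root of the quadratic $\mu^2 y + 2\mu x + z = 0$. Part (a) forces $\val(x^2)<\val(yz)$ under the hypothesis $\val(z)=2i+1+\val(y)$, so the discriminant factors as $x^2(1-\alpha)$ with $\alpha=yz/x^2$ of valuation $2(i+\val(y/x))+1$. Assuming $\sqrt{1-\alpha}\in\OF^\times$ exists, the small root $\mu_{-}=x(\sqrt{1-\alpha}-1)/y\approx -z/(2x)$ has $\val(\mu_{-})=2i+1+\val(y/x)-e$, which is $\geq i+1$ exactly when $i+\val(y/x)\geq e$. Under that inequality the move annihilates $z$, contradicting $z\neq 0$; hence the hypotheses of (b) force $i+\val(y/x)\leq e-1$.

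The main obstacle, and the source of the parameter $e$ in the conclusion, is the Hensel step. Applying Hensel's lemma to $T^2-(1-\alpha)$ at $T_0=1$ shows $\sqrt{1-\alpha}\in\OF$ iff $\val(\alpha)>2\val_F(2)=2e$, which is the same threshold $i+\val(y/x)\geq e$ that makes $\mu_{-}\in\p^{i+1}$. So the two obstructions coincide, yielding the clean bound in (b). Verifying this Hensel statement is immediate for odd $p$ and uses the unramified hypothesis on $F/\Q_2$ when $p=2$.
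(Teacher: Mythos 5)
Your proof is correct and follows essentially the same route as the paper: conjugate $S(x,y,z)$ by the lower-unipotent one-parameter subgroup of the Siegel Levi (the paper's matrix $A$ with parameter $c\in\p^{i+1}$ is exactly your $M(\mu)$), kill $x$ by taking $\mu=-x/y$ for part (a), and kill $z$ via a Hensel-approximated square root of $1-yz/x^2$ for part (b). The only small overstatement is the ``iff'' in your Hensel step --- the lemma gives only the forward implication, that $\val(\alpha)>2e$ ensures $\sqrt{1-\alpha}\in\OF$ --- but since you use only that direction (together with the coinciding threshold $i+\val(y/x)\ge e$ making $\mu_-\in\p^{i+1}$), the argument is unaffected.
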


\begin{proof} Let $A = \begin{bsmallmatrix}
1&  & & \\
c & 1 & & \\
 & & 1 &  \\
 & & -c & 1
\end{bsmallmatrix}$ where $c\in \p^{i+1}$. Then $$[t_{i,j}S(x,y,z)] = [(t_{i,j}At_{i,j}^{-1})t_{i,j}(A^{-1}S(x,y,z)A)] = [t_{i,j}S(x+cy, y, z+2cx+c^2y)].$$

a) If $\val(x)>i+\val(y)$ take $c=-x/y\in \p^{i+1}$ so $[t_{i,j}S(x,y,z)] = [t_{i,j}S(0, y, z-x^2/y)]$. 

b) Let $k=\val(zy/x^2)$. If $i+ \val(y/x)\geq  e$ then $k = 1+2(i+\val(y/x)) \geq 1+2e$ so there exists a unique $s\in 1+\p^{k-e}$ with $s^2 = 1-zy/x^2$. Taking $c=-\frac{x}{y}(1-s)\in \p^{\val(x/y)+k-e}=\p^{2i+1+\val(y/x)-e}\subset \p^{i+1}$ shows $[t_{i,j}S(x,y,z)] = [t_{i,j} S(xs, y, 0)]$. 
\end{proof}

%\begin{proof} Let $A = \begin{bmatrix}
%a& b & & \\
%c & d & & \\
% & & a & -b \\
% & & -c & d
%\end{bmatrix}$ where $\begin{bmatrix}
%a& b \\
%c & d
%\end{bmatrix}\in \GL_2(\OF)$ and $c\in \p^{i+1}$. Then $$[t_{i,j}S(x,y,z)] = [(t_{i,j}At_{i,j}^{-1})t_{i,j}(A^{-1}S(x,y,z)A)] = [t_{i,j}S(x', y', z')]$$ where \begin{eqnarray*}
%x' & =& (ad+ bc)x + cdy+abz \\
%y ' & =& d^2y + 2bdx + b^2z\\
%z' & =& a^2z + 2 acx + c^2 y.
%\end{eqnarray*} 

%a) If $\val(x)>i+\val(y)$ then taking $a=d=1$, $b=0$ and $c=-x/y\in \p^{i+1}$ shows $[t_{i,j}S(x,y,z)] = [t_{i,j}S(0, y, z')]$. 

%b) Let $e=\val(2)$ and $k=\val(zy/x^2)$. If $i+ \val(y/x)\geq  e$ then $k = 1+2(i+\val(y/x)) \geq 1+2e$ so there exists a unique $s\in 1+\p^{k-e}$ with $s^2 = 1-zy/x^2$. Taking $a=d=1$, $b=0$, and $c=-\frac{x}{y}(1-s)\in \p^{\val(x/y)+k-e}=\p^{2i+1+\val(y/x)-e}\subset \p^{i+1}$ shows $[t_{i,j}S(x,y,z)] = [t_{i,j} S(x', y, 0)]$. 
%\end{proof}

Define the two subgroups $U_1, U_2 \subset \GL_{2,2}(q)$ by  $$U_1 = \left( \begin{bsmallmatrix}
1 & \\
* & 1
\end{bsmallmatrix}, I_2\right), \qquad U_2 = \left( I_2 , \begin{bsmallmatrix}
1 & \\
* & 1
\end{bsmallmatrix}\right)$$ and observe that if $R_g\supset U_1$ or $R_g\supset U_2$ then $\tau^{R_g}=0$ by cuspidality of $\sigma$.

\begin{lemma}\label{first support inequalities}  Let $i\geq 0$. If $[t_{i,j}S(x, y,z)] \in \Supp(\pi)$ then:

a) We must have $1\leq j\leq  n-2-2i$. 

b) We may assume $\val(x^2)\geq 2i+j+2$ and $\val(y^2)\geq j+1$,

c) We may assume $\val(z) = 2i+1+\val(y)$.

\end{lemma}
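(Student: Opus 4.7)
The plan is to prove each inequality by contraposition, using the cuspidality of $\sigma$: since $\sigma\subseteq[\rho_1\boxtimes\rho_2]$ with the $\rho_i$ cuspidal on $\GL_2(q)$, any subgroup $R\subseteq\GL_{2,2}(q)$ containing the lower or upper unipotent radical $U_i^\pm$ of either $\GL_2$-factor satisfies $\sigma^R=0$. So it suffices, for each violated inequality, to produce an $h\in\Si{n}$ such that $ghg^{-1}\in\K{}$ and (together with its variants) contributes to $R_g$ a subgroup containing some $U_i^\pm$; this forces $\sigma^{R_g}=0$ and $[g]\notin\Supp(\pi)$.

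The test elements come from two families. First, the lower-unipotent matrices $h=S(0,0,\alpha\varpi^m)\in\Si{n}$ (with $\alpha\in\OF^\times$, $m\geq n$): since $S(0,0,\alpha\varpi^m)$ commutes with $S(x,y,z)$, we have $ghg^{-1}=t_{i,j}\,h\,t_{i,j}^{-1}$ with $(4,1)$-entry $\alpha\varpi^{m-2i-j}$. Second, Siegel-Levi and Siegel-unipotent-radical elements $h\in\Si{n}$, for which the direct block computation of $S(x,y,z)\cdot h\cdot S(x,y,z)^{-1}$ introduces $x,y,z$-dependent entries at positions $(3,1), (3,2), (4,1), (4,2)$; after a further $t_{i,j}$-conjugation, tracking valuations pinpoints when these entries cross into $\p\setminus\p^2$ at $(4,1)$ (contributing to $U_1$) or $\OF\setminus\p$ at $(3,2)$ (contributing to $U_2$), and analogously at $(1,4)$ and $(2,3)$ for the opposite unipotents.

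For part (a), the upper bound $j\leq n-2-2i$ comes from the first family with $m=2i+j+1$, valid when $j\geq n-1-2i$. The lower bound $j\geq 1$ comes from a Siegel-radical upper-unipotent element whose $(2,3)$-entry $c\in\OF^\times$ scales to a unit whenever $j\leq 0$, generating $U_2^-$. For part (b), Siegel-radical elements of the form $\begin{bsmallmatrix}I_2 & M'\\ 0 & I_2\end{bsmallmatrix}$ with $M'=cE_{21}$ conjugate to produce $(4,1)$-entries of the form $cx^2\varpi^{-(2i+j)}$, and analogous Levi-factor choices yield cross terms $cy^2$, $2axy$, $bz^2$; the bounds $\val(x^2)\geq 2i+j+2$ and $\val(y^2)\geq j+1$ are exactly the constraints that prevent these cross terms from achieving the critical valuation and populating $U_i^\pm$ in $R_g$.

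For part (c), the one-sided cases $y=0, z\neq 0$ and $y\neq 0, z=0$ are handled using the diagonal Levi element ${\rm diag}(a,b,a,b)$ with $a\equiv b\pmod{\p^k}$, choosing $k$ so that $(a-b)z$ or $(b-a)y$ reaches exactly the threshold valuation: varying $a, b$ forces $R_g$ to contain a lower (or upper) Borel of the relevant $\GL_2$-factor, killing $\sigma^{R_g}$ by cuspidality. For $yz\neq 0$ with $\val(z)\neq 2i+1+\val(y)$, the direction $\val(z)>2i+1+\val(y)$ reduces to a one-sided case after Lemma~\ref{first double coset inequalities}(b), and the direction $\val(z)<2i+1+\val(y)$ is eliminated by the same Borel argument. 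The main obstacle is the careful bookkeeping of which entries survive the two-stage conjugation and contribute to which unipotent radicals, especially in the mixed-parameter cases for part (b) and when $e=\val_F(2)=1$ modulates the $2axy$ cross term.
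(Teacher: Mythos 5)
Your overall strategy coincides with the paper's: conjugate explicit elements of $\Si{n}$ by $g$, check the result lands in $\K{}$, and argue that the images in $\K{}/\K{}^+$ generate a unipotent radical of one of the $\GL_2(q)$-factors, which kills $\sigma^{R_g}$ by cuspidality. The test families you name (the lower-unipotent elements $S(0,0,\alpha\varpi^m)$, the Siegel-radical element with a single $(2,3)$-entry, and diagonal Levi elements with $a\equiv b$) are essentially the ones the paper uses.

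There is, however, a genuine gap at the crux of the argument. You repeatedly pass from ``some conjugated entry reaches its critical valuation'' to ``$R_g$ contains some $U_i^\pm$,'' but a single test family generically pushes \emph{two} entries (say the $(4,1)$- and $(3,2)$-entries) to criticality for the same parameter value, and then the image is only a graph subgroup such as $\left\{\left(\begin{bsmallmatrix}1&\\ct&1\end{bsmallmatrix},\begin{bsmallmatrix}1&\\t&1\end{bsmallmatrix}\right)\right\}$, which contains neither unipotent radical and on which cuspidal representations \emph{do} have invariants (Lemma \ref{character table computation lemma}(c) gives $\dim\sigma^R=q-1$ for exactly such a subgroup). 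This is not bookkeeping to be deferred: the boundary case of part (c), $\val(z)=2i+1+\val(y)$, is precisely where the two thresholds $j-\val(y)$ and $2i+j+1-\val(z)$ coincide and the diagonal argument yields only a graph --- these are the type III/IV cosets, which remain in the support. The paper rules out simultaneity explicitly: by a parity observation in part (a) ($2i+j-2\val(x)+1\neq -j$); by first invoking Lemma \ref{first double coset inequalities}(a) to normalize $i+\val(y/x)\geq 0$ in part (b), without which the $(3,2)$-entry $\varpi^{-j}y^2b$ is not even integral and the test element fails to lie in $\K{}$; and by taking $M=\max(j-\val(y),\,2i+j+1-\val(z))$ in part (c) so that exactly one entry is critical whenever the thresholds differ. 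Your proposal never engages with this, and your treatment of the $j\leq 0$ case ignores the competing $(4,1)$-entry $\varpi^{-2i-j}x^2b$ entirely. Separately, your reduction of the case $\val(z)>2i+1+\val(y)$ via Lemma \ref{first double coset inequalities}(b) is a misapplication: that lemma has $\val(z)=2i+1+\val(y)$ as a hypothesis; both directions of the inequality in (c) are handled uniformly by the single diagonal computation with the maximum of the two thresholds.
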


\begin{proof} Let $g = t_{i,j}S(x,y,z)$.

a) If $2i+j\geq n-1$ then take $c\in \p^{2i+j+1-n}$ and compute $$g\begin{bmatrix}
1 & & & \\
& 1 & & \\
&  & 1 & \\
\varpi^n c & & & 1
\end{bmatrix}g^{-1} = \begin{bmatrix}
1 & & & \\
& 1 & & \\
&  & 1 & \\
\varpi^{n-2i-j}c & & & 1
\end{bmatrix}.$$ So $R_g \supset U_1$. If $j\leq 0$ then take $b\in \OF$ and compute $$g\begin{bmatrix}
1 & & & \\
xb & 1 & b & \\
& & 1 & \\
& & -xb & 1
\end{bmatrix}g^{-1} = \begin{bmatrix}
1 & & & \\
 & 1-yb & \varpi^jb & \\
& \varpi^{-j}y^2b & 1-yb & \\
\varpi^{-2i-j}x^2b & &  & 1
\end{bmatrix}.$$ 
Observe that $\varpi^{-j}y^2b\in \p$. If $x=0$ then taking $b\in \p^{-j}$ shows $R_g$ contains a conjugate of $U_2$. If $x\neq 0$, then $2i+j-2\val(x)+1\neq -j$ by parity. So let $M:= \max( 2i+j-2\val(x)+1, -j) \geq 0$ and take $b\in \p^M$. Thus $R_g$ contains either $U_1$ or a conjugate of $U_2$.

b) Suppose $\val(x^2)\leq 2i+j+1$, so $x\neq 0$ and we may assume $0\leq i+ \val(y/x)$ by Lemma \ref{first double coset inequalities}. Let $b\in \p^{2i+j+1-\val(x ^2)}$ and compute $$g\begin{bmatrix}
     1 & & & \\
      & 1 & b & \\
     & & 1 & \\
     & &  & 1 
     \end{bmatrix}g^{-1} = \begin{bmatrix}
     1 & & & \\
      -\varpi^{-i}xb & 1-yb & \varpi^j b & \\
      \varpi^{-i-j}xyb & \varpi^{-j}y^2b  & 1-yb & \\
      -\varpi^{-2i-j}x^2b & \varpi^{-i-j}xyb & \varpi^{-i}xb  & 1 
     \end{bmatrix}.$$ Since $j\geq 1$ and $\val(x)\leq i+j$, we have  $\varpi^{-i}xb\in \p^{i+j-\val(x)}\subset \p$, $\varpi^{-i-j}xyb\in \p^{i+1 +\val(y/x)}\subset \p$, and $\varpi^{-j}y^2b\in \p^{1+2(i+\val(y/x))}\subset \p$. So $R_g \supset U_1$ and we may assume $\val(x^2)\geq 2i+j+2$. 
     
 If $0\leq i+\val(y/x)$ then $2i+j+2\leq \val(x^2) \leq 2i+\val(y^2)$ so $\val(y^2)\geq j+2$.  So if $\val(y^2)\leq j$ then $\val(x)>i+\val(y)\geq i+1$ and we repeat the previous computation, taking now $b\in \p^{j-\val(y^2)}$ and noting then that $xb\in \p^{i+1}$, $xyb\in \p^{j+\val(x/y)}\subset \p^{i+j+1}$, and $x^2b\in \p^{j+2\val(x/y)}\subset \p^{2i+j+2}$. So $R_g\supset U_2$ and we may assume that $\val(y^2)\geq j+1$.

 c) Suppose $\val(z)\neq 2i+1+\val(y)$. If $1\leq \val(y)\leq j-1$ or $1\leq \val(z)\leq 2i+j$ then let $M :=\max(j-\val(y), 2i+j+1-\val(z))\geq1$ and take $a\in 1+\p^M$. Then compute $$g \begin{bmatrix}
        1 & & & \\
        & a & & \\
        & & 1 & \\
        & & & a
        \end{bmatrix} g^{-1} = \begin{bmatrix}
        1 & & & \\
        & a & & \\
        &   \varpi^{-j}y(a-1) & 1 &  \\
        \varpi^{-2i-j}z(1-a) & & & a
        \end{bmatrix}$$ so either $R_g \supset U_1$ or $R_g \supset U_2$. On the other hand, if $\val(y)\geq j$ and $\val(z)\geq 2i+j+1$ then $[g]=[t_{i,j} S(x,0,0)]$. 
\end{proof}

\begin{lemma}\label{reducing to the levi}
Suppose $[g]\in \Supp(\pi)$ and let $M = \begin{bsmallmatrix}
* & * & & \\
 * & * & & \\
  & & * & * \\
   & & * & * 
\end{bsmallmatrix}\cap G$ be the Siegel Levi subgroup of $G$. Then \begin{eqnarray}
[g] &=& N_G(\K{})g(\Si{n}\cap M)\nonumber \\
R_g &=& ( g(\Si{n}\cap M)g^{-1} \cap \K{})\K{}^+/\K{}^+ \nonumber.
\end{eqnarray}
\end{lemma}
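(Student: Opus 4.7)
The plan is to reduce both assertions to a single claim: after conjugating by $g$, the upper and lower unipotent parts of $\Si{n}$ both land in the pro-unipotent radical $\K{}^+$, so only the $M$-part contributes nontrivially. Let $P = MN$ denote the Siegel parabolic with upper-block unipotent radical $N$ and opposite unipotent $\bar{N}$. Since $\Si{n}$ is the preimage of $P(\OF/\p^n)$ under the reduction $K \to \GSp(4, \OF/\p^n)$, it admits the Iwahori-type factorization $\Si{n} = (\Si{n} \cap N)(\Si{n} \cap M)(\Si{n} \cap \bar{N})$. I will establish the two inclusions
\[ g(\Si{n} \cap N) g^{-1} \subset \K{}^+ \qquad \text{and} \qquad g(\Si{n} \cap \bar{N}) g^{-1} \subset \K{}^+ \]
using the support conditions from Lemma \ref{first support inequalities}.

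The $\bar{N}$ inclusion is the easier of the two, because $\bar{N}$ is abelian and $g = t_{i,j}S(x,y,z) \in M\bar{N}$, so conjugation by $g$ collapses to conjugation by $t_{i,j}$. For $\bar{u} = S(x',y',z')$ with $x', y', z' \in \p^n$ one gets $g\bar{u}g^{-1} = S(\varpi^{-i-j}x', \varpi^{-j}y', \varpi^{-2i-j}z')$; the bound $n \geq 2i+j+2$ from Lemma \ref{first support inequalities}(a) places these entries in $\p, \p, \p^2$ respectively, matching the ideals required by $\K{}^+$ at positions $(3,1), (3,2), (4,1)$.

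The $N$ inclusion is the main computation. For $u' \in \Si{n} \cap N$ with upper-right block $U_{12} = \begin{bsmallmatrix} a & b \\ c & a \end{bsmallmatrix}$, $a, b, c \in \OF$, expanding $Su'S^{-1}$ with $X = \begin{bsmallmatrix} x & y \\ z & x \end{bsmallmatrix}$ yields a matrix whose four $2\times 2$ blocks are built from $U_{12}$, $U_{12}X$, $XU_{12}$, and $XU_{12}X$. Conjugating by $t_{i,j}$ then rescales entries by the appropriate powers of $\varpi$, and I verify each of the sixteen resulting entries lies in the ideal prescribed by $\K{}^+$. The bounds $\val(x^2) \geq 2i+j+2$, $\val(y^2) \geq j+1$, $\val(z) \geq 2i+1$ from parts (b) and (c) of Lemma \ref{first support inequalities} combine to give $\val(x)+\val(y) \geq i+j+2$ and $\val(x^2+yz) \geq 2i+j+2$, which are exactly what the delicate entries involving $XU_{12}X$ (positions $(3,1), (3,2), (4,1), (4,2)$) require after rescaling by $\varpi^{-i-j}, \varpi^{-j}, \varpi^{-2i-j}, \varpi^{-i-j}$.

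Granted both inclusions, each part of the lemma follows from one formal rearrangement. Any $h \in \Si{n}$ factors as $h = u'mu$ with $u' \in \Si{n} \cap N$, $m \in \Si{n} \cap M$, $u \in \Si{n} \cap \bar{N}$; since $M$ normalizes $\bar{N}$ and preserves $\Si{n} \cap \bar{N}$, one rewrites $mu = \bar{u}'m$ with $\bar{u}' := mum^{-1} \in \Si{n} \cap \bar{N}$, giving $gh = (gu'g^{-1})(g\bar{u}'g^{-1})\,gm$. Both conjugate factors lie in $\K{}^+ \subset N_G(\K{})$ and $m \in \Si{n} \cap M$, yielding $[g] = N_G(\K{})\,g\,(\Si{n} \cap M)$. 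The same rearrangement gives $ghg^{-1} = (gu'g^{-1})(g\bar{u}'g^{-1})(gmg^{-1})$; whenever $ghg^{-1} \in \K{}$ one then has $gmg^{-1} \in \K{}$ and the two agree in $\K{}/\K{}^+$, proving the $R_g$ identity. The main obstacle is plainly the sixteen-entry check in the $N$ inclusion — it is elementary but uses all three parts of Lemma \ref{first support inequalities} in an essential way.
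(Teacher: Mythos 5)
Your proof is correct and follows essentially the same route as the paper: both factor $\Si{n}$ via the Iwahori decomposition into lower-unipotent, Levi, and upper-unipotent parts, and then use the support bounds from Lemma \ref{first support inequalities} to show that the $g$-conjugates of both unipotent factors land in $\K{}^+$, from which the two identities follow by the same rearrangement. The only cosmetic difference is that the paper records the needed bounds as $\val(xy),\val(yz)\geq i+j+1$ and $\val(xz),\val(z^2)\geq 2i+j+2$ rather than via $\val(x^2+yz)$, but the underlying sixteen-entry check is the same.
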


\begin{proof} We may take $g= t_{i,j}S(x,y,z)$ with $i\geq 0$, $1\leq j\leq n-2-2i$, $\val(x^2)\geq 2i+j+2$, $\val(y^2)\geq j+1$, and $\val(z)=2i+1+\val(y)$.  
Since $2i+j\leq n-2$ we have $$g\begin{bmatrix}
I_2 & \\
\varpi^n C & I_2
\end{bmatrix}g^{-1} \subset \K{}^+$$ for all $C = \begin{bmatrix}
c_1 & c_2 \\
c_3 & c_1
\end{bmatrix} \in M_2(\OF)$. Since $\val(xy), \val(yz)\geq i+j+1$ and  $\val(xz), \val(z^2)\geq 2i+j+2$ we have 
$$g\begin{bmatrix}
I_2 & B \\
 & I_2
\end{bmatrix}g^{-1} \subset \K{}^+$$ for all $B = \begin{bmatrix}
b_1 & b_2 \\
b_3 & b_1
\end{bmatrix} \in M_2(\OF)$. Every $s\in \Si{n}$ can be written (uniquely) as $$s=\begin{bmatrix}
I_2 & \\
\varpi^n C & I_2
\end{bmatrix} \begin{bmatrix}
A & \\
& \lambda A'
\end{bmatrix} \begin{bmatrix}
I_2 & B \\
& I_2
\end{bmatrix}$$ with the factors in any order. The conclusions all follow. 
\end{proof}

Thanks to Lemma \ref{first support inequalities}, if $[g]\in \Supp(\pi)$ then we may assume that $g$ takes on one of four forms: $g=t_{i,j}$, $g=t_{i,j}S(x,0,0)$, $g=t_{i,j}S(0, y, z)$, or $g=t_{i,j}S(x, y, z)$ where $\val(z)=2i+1+\val(y)$. In the last case, when $xyz\neq 0$, Lemma \ref{first double coset inequalities} then shows that we may assume $q$ is even and $i+\val(y/x)=0$. We now complete the determination of necessary and sufficient conditions to have $[g]\in \Supp(\pi)$ for each of these forms.

 \begin{lemma} \label{keyLemma}  Suppose $i\geq 0$, $1\leq j\leq n-2-2i$, $\val(x^2)\geq 2i+j+2$, $\val(y^2)\geq j+1$, and $\val(z)=2i+1+\val(y)$.

a) If $q$ is even, or if $q$ is odd and $\sigma$ has trivial central character, then $[t_{i,j}]\in \Supp(\pi)$ and $$R_{t_{i,j}} = \{\left( \begin{bsmallmatrix}
a & \\ & b
\end{bsmallmatrix}, \begin{bsmallmatrix}
c & \\ & abc^{-1}
\end{bsmallmatrix}  \right)  : a,b,c\in \Fq^\times\}.$$

  b) If $\val(x)\leq i+ j$ then $[t_{i,j}S(x,0,0)]\in \Supp(\pi)$ if and only if $q$ is even and $\val(x)=i+j$, in which case 
  $$R_{t_{i,j}S(x,0,0)} = \{\left( \begin{bsmallmatrix}
  a & \\ & b
  \end{bsmallmatrix}, \begin{bsmallmatrix}
  c & \\ & abc^{-1}
  \end{bsmallmatrix}  \right)  : a,b,c\in \Fq^\times\}.$$

c) If $\val(y)\leq j-1$ then $[t_{i,j}S(0,y,z)]\in \Supp(\pi)$ if and only $q$ is even and if $ j-\val(y)\leq 2$. % and either $j-\val(y)$ is odd or   $j-\val(y)=2e$. 

If $j-\val(y)=1$ then $R_{t_{i,j}S(0,y,z)}$ is conjugate to $$\{\left( \begin{bsmallmatrix}
      a & \\ u  & a  \end{bsmallmatrix} , \begin{bsmallmatrix}
      a & \\ u  & a \end{bsmallmatrix} \right): a\in \Fq^\times, u\in \Fq \}.$$
      
      If $j-\val(y)=2$ then $R_{t_{i,j}S(0,y,z)}$ is conjugate to $$\{\left(\begin{bsmallmatrix}
            a & \\
           u+a(b^2+b)  & a 
            \end{bsmallmatrix} , \begin{bsmallmatrix}
            a & \\
           u  & a
            \end{bsmallmatrix}\right): a\in \Fq^\times, u, b\in \Fq\}.$$

 d) Suppose $q$ is even, $\val(y)\leq j-1$, and $i+\val(y/x)=0$. Then $[t_{i,j}S(x,y,z)]\in \Supp(\pi)$ if and only if $j - \val(y) = 1$, in which case $R_{t_{i,j}S(x,y,z)}$ is conjugate to $$\{\left(\begin{bsmallmatrix}
              a & \\
             u+a(b^2+b)  & a 
              \end{bsmallmatrix} , \begin{bsmallmatrix}
              a & \\
             u  & a
              \end{bsmallmatrix}\right): a\in \Fq^\times, u, b\in \Fq\}.$$

% If these conditions hold with $M$ odd then $R_{t_{i,j}S(x,y,z)}$ is conjugate to $$\{\left( \begin{bsmallmatrix}
%       a & \\ u  & a  \end{bsmallmatrix} , \begin{bsmallmatrix}
%       a & \\ u  & a \end{bsmallmatrix} \right): a\in \Fq^\times, u\in \Fq \}.$$

 \end{lemma}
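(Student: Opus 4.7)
The plan is to compute $R_g$ explicitly in each case by applying Lemma \ref{reducing to the levi}. Parametrize an element $h \in \Si{n}\cap M$ by $A = \begin{bsmallmatrix}a & b \\ c & d\end{bsmallmatrix} \in \GL_2(\OF)$ and $\mu \in \OF^\times$, so that its lower block is $D = (\mu/\delta)\begin{bsmallmatrix}a & -b \\ -c & d\end{bsmallmatrix}$ where $\delta = \det A$. Then
\[ g h g^{-1} = t_{i,j}\begin{bsmallmatrix}A & 0 \\ XA - DX & D\end{bsmallmatrix}t_{i,j}^{-1}, \qquad X = \begin{bsmallmatrix}x & y \\ z & x\end{bsmallmatrix}. \]
The requirement $g h g^{-1} \in \K{}$ translates into a system of valuation inequalities on the sixteen entries; these reduce to $c \in \p^{i+1}$ together with conditions on the four entries of $XA - DX$. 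Solving these inequalities and reducing modulo $\K{}^+$ via the isomorphism $\K{}/\K{}^+ \cong \GL_{2,2}(q)$ yields $R_g$. Since $\sigma$ is cuspidal, $\sigma^{R_g} = 0$ whenever $R_g$ contains $U_1$ or $U_2$; in the remaining cases $\dim \sigma^{R_g}$ is computed by Lemma \ref{character table computation lemma}.

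For part a), $X = 0$, so $XA - DX = 0$ and the only constraint is $c \in \p^{i+1}$; three free parameters $\bar a,\bar d,\bar\mu \in \Fq^\times$ remain, and the image is directly identified with the group in Lemma \ref{character table computation lemma}a). Nonvanishing of $\sigma^{R_{t_{i,j}}}$ then follows from the central character hypothesis. For parts b)--d), the entries of $XA - DX$ factor as (monomials in $a,b,c,d,x,y,z$) times $1-\mu/\delta$ (on the diagonal) or $1+\mu/\delta$ (off the diagonal). The diagonal conditions force $\mu/\delta \equiv 1 \pmod{\p^{i+j+1-\val(x)}}$, whence $1+\mu/\delta \equiv 2 \pmod \p$. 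This produces the parity dichotomy at the heart of the lemma: for $q$ odd, $\bar 2 \in \Fq^\times$ is invertible, and variation of $b$ and $c$ realizes $U_1$ and $U_2$ inside $R_g$, forcing $\sigma^{R_g} = 0$; for $q$ even, $\val(2) = 1$ (here $F/\Q_2$ unramified is essential), the off-diagonal cross-terms acquire extra valuation, and they may vanish modulo $\p$ under the right alignment of $\val(x),\val(y),\val(z)$.

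In part b), $X = xI$ gives $XA - DX = x(A-D)$; careful tracking of the four valuation inequalities shows that the off-diagonal cross-terms vanish modulo $\p$ precisely when $q$ is even and $\val(x) = i+j$, and then the image matches the asserted $R_g$. Otherwise variation of $b$ or $c$ realizes $U_1$ or $U_2$ inside $R_g$. Parts c) and d) proceed similarly with $X = \begin{bsmallmatrix}0 & y \\ z & 0\end{bsmallmatrix}$ (resp.\ general $X$): the balancing $\val(z) = 2i+1+\val(y)$ from Lemma \ref{first support inequalities}c) ensures the lower-left entries of $XA - DX$ are comparable in valuation, and after an explicit conjugation inside $\K{}/\K{}^+$, $R_g$ falls into one of the subgroups of Lemma \ref{character table computation lemma}c) (when $j - \val(y) = 1$) or d) (when $j - \val(y) = 2$). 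For any other configuration of $j - \val(y)$, enough cross-terms survive to embed $U_1$ or $U_2$ into $R_g$.

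The principal obstacle is the fine valuation bookkeeping in the $q$ even case: one must track not merely $\val(1+\mu/\delta) \geq 1$ but the precise reduction $\overline{(1+\mu/\delta)/\varpi} \in \Fq$, which varies over $\Fq$ as $\mu$ varies within $\delta(1+\p)$ and is essential for identifying the correct image. Establishing the ``only if'' directions requires, in each excluded case, producing a specific Levi element $h$ whose contribution to $R_g$ lies in $U_1$ or $U_2$; this mechanical but case-by-case verification is the main technical burden of the proof.
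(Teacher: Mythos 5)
Your strategy matches the paper's: you parametrize Siegel Levi elements $h = \begin{bsmallmatrix}A & \\ & D\end{bsmallmatrix}$ with $D = (\mu/\delta) A'$, compute $ghg^{-1}$ via the $\begin{bsmallmatrix}A & 0 \\ XA - DX & D\end{bsmallmatrix}$ block form, reduce modulo $\K{}^+$, and use cuspidality plus the $q$-parity of $\val(2)$ to decide membership in $\Supp(\pi)$ and to identify $R_g$ with the subgroups of Lemma \ref{character table computation lemma}; this is precisely the paper's method.

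One factual slip: your claim that the entries of $XA - DX$ factor as monomials times $1 - \mu/\delta$ (diagonal) or $1 + \mu/\delta$ (off-diagonal) holds only in part b), where $X = xI_2$ is scalar. In parts c) and d), with $X = \begin{bsmallmatrix}x & y \\ z & x\end{bsmallmatrix}$ nonscalar, the entries are genuine two- or three-term sums (e.g.\ $xa_1(1-\lambda) + ya_3 + z\lambda a_2$, $y(a_4 - \lambda a_1)$, $z(a_1 - \lambda a_4)$, etc.), and the paper's proof of d) in particular needs a nontrivial elimination among four such quantities $S_1, R_1, R_2, S_2$ to pin down the exact image in $\GL_{2,2}(q)$. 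Your later remarks about the balancing condition $\val(z) = 2i + 1 + \val(y)$ and the need for explicit conjugations show you see the difficulty, but the blanket factorization statement should be withdrawn and replaced by the actual sums; otherwise the ``conditions on the four entries'' you invoke would be misstated from the outset.
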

 
 We refer to the types of double cosets that appear in this lemma as types I, II, IIIa, IIIb, and IV, respectively; see Table \ref{tab:table1}. 
 
 \begin{proof}
 Let $g=t_{i,j}S(x,y,z)$. Lemma \ref{character table computation lemma} shows that if $R_g$ has any of the claimed forms and $\omega_\sigma=1$ then $[g]\in \Supp(\pi)$. Let $A = \begin{bsmallmatrix}
 a_1 & a_2 \\ a_3 & a_4
 \end{bsmallmatrix}\in \GL_2(\OF)$, and $A' = \begin{bsmallmatrix}
  a_1 & -a_2 \\ -a_3 & a_4
  \end{bsmallmatrix}$. 

a) This is clear.

  b) Suppose $\val(2x)\leq i+j$. Let $a\in \p^{i+j+1-\val(2x)}\subset \p$ and compute 
  $$g\begin{bmatrix}
  1 & & & \\
  \varpi^ia & 1 & & \\
  & & 1 & \\
  & & -\varpi^i a & 1 
  \end{bmatrix}g^{-1} = \begin{bmatrix}
  1 & & & \\
  a & 1 & & \\
   & & 1 & \\
   2x\varpi^{-i-j}a & & -a & 1 
  \end{bmatrix}.$$ Thus $R_g \supset U_1$. Conversely, suppose that $\val(2x) \geq i+j+1$ and compute 
$$g\begin{bmatrix}
   A & \\
   & \lambda A'
   \end{bmatrix}g^{-1} = \begin{bmatrix}
            a_1 & \varpi^ia_2 & & \\
           \varpi^{-i} a_3 & a_4 & & \\
         \varpi^{-i-j}xa_1(1-\lambda)   & \varpi^{-j}xa_2(1+\lambda) & \lambda a_1 & -\varpi^{i}\lambda a_2 \\
         \varpi^{-2i-j}xa_3(1+\lambda)   & \varpi^{-i-j}xa_4(1-\lambda) & -\varpi^{-i}\lambda a_3 & \lambda a_4
            \end{bmatrix}.$$. 
            
For this to lie in $\K{}$ we must have $a_3\in \p^{i+1} \subset \p$, so $a_1, a_4\in \OF^\times$, and hence $\lambda \in 1+\p^{i+j+1-\val(x)}$. Therefore $1+\lambda \in 2+\p^{i+j+1-\val(x)} \subset \p^{i+j+1-\val(x)}$, so $xa_3(1+\lambda) \in \p^{2i+j+2}$ and $xa_2(1+\lambda)\in \p^{j+1}$. Thus $R_g$ is as claimed.

 c)  Suppose $2e <j-\val(y)$. Set $a = 1+\frac{v\varpi^j}{2y}$ where $v\in \OF$. Then compute 
  $$g\begin{bmatrix}
  1 & & & \\
  & a & & \\
  & & a^{-1} & \\
  & & & 1
  \end{bmatrix}g^{-1} = \begin{bmatrix}
  1 & & & \\
  & a & & \\
  & \varpi^{-j}y(a-a^{-1}) & a^{-1} & \\
  & & & 1
  \end{bmatrix}$$ and observe that $$\varpi^{-j}y(a-a^{-1}) = v a^{-1} \left(1+\frac{v\varpi^j}{4y}\right)  \equiv v \pmod {\p} .$$ Thus $R_g \supset U_2$.

%  Next suppose that $2e > j-\val(y) \geq 1$ (so $q$ is even) and $j-\val(y)$ is even. Repeat the above computation taking now $a = 1+v\varpi^l$ where $l=\frac{1}{2}(j-\val(y))<e$ and $v\in \OF$. Then $$\varpi^{-j}y(a-a^{-1}) =va^{-1} \left(\frac{y}{\varpi^{\val(y)}} \right)(v+2\varpi^{-l})\equiv v^2 \left( \frac{y}{\varpi^{\val(y)}}  \right)\pmod {\p}. $$ Varying $v$ (using that $q$ is even) shows $R_g \supset U_2$.
%   

  % Conversely, suppose  $ j-\val(y)\leq 2e$, and either $j-\val(y)$ is odd or $j-\val(y)=2e$. 
                     
  We may now assume $q$ is even so $e=1$ and $2\geq j-\val(y)\geq 1$. Then
   \begin{equation} \label{matrix 1}
g\begin{bmatrix}
      A & \\
      & \lambda A'
      \end{bmatrix}g^{-1} = \begin{bmatrix}
      a_1 & \varpi^ia_2 & & \\
      \varpi^{-i}a_3 & a_4 & & \\
     \varpi^{-i-j}(ya_3 +z\lambda a_2) & \varpi^{-j}y(a_4 - \lambda a_1) & \lambda a_1 & -\varpi^i\lambda a_2 \\
    \varpi^{-2i-j}z(a_1 - \lambda a_4)  & \varpi^{-i-j}(za_2 + y\lambda a_3 ) & -\varpi^{-i}\lambda a_3 & \lambda a_4
      \end{bmatrix}. 
   \end{equation} It is clear that if this matrix lies in $\K{}$, then its image in $\K{}/\K{}^+$ is independent of $a_2$ and $a_3$. So for the purposes of determining $R_g$, we may assume that $a_2=a_3=0$. Let $a_4 = \lambda a_1 + u\varpi^jy^{-1}$ with $u\in \OF$. Then $a_1-\lambda a_4 = a_1(1-\lambda^2) -\lambda u\varpi^jy^{-1}$. Since $a_1\in \OF^\times$, it is necessary and sufficient that $$\val(\lambda^2-1) \geq j-\val(y)$$ in order for the matrix to lie in $\K{}$; assume this holds. In particular, $\lambda\pm 1 \in\p$ since $q$ is even. So $\val(\lambda^2-1) = \val(\lambda-1)+\val(\lambda+1) \geq 2$. %if $j-\val(y)=1$. If $1<\val(\lambda-1)$ then $1=\val(\lambda-1+2) = \val(\lambda+1)$ so $\val(\lambda^2-1) = \val(\lambda-1)+\val(\lambda+1) >2 \geq j-\val(y)$. %If $1=\val(2)>\val(\lambda-1)$ then $\val(\lambda+1)=\val(\lambda-1)$  and $2>\val(\lambda^2-1) = 2\val(\lambda-1)$. This forces $j-\val(y)\neq \val(\lambda^2-1)$ since $j-\val(y)$ is odd if less than $2$, hence $\val(\lambda^2-1)>j-\val(y)$ in this case as well. 
      %Finally, if $1=\val(\lambda-1)$ then  $\val(\lambda^2-1) \geq 2 \geq j-\val(y)$. So if $\val(\lambda^2-1)=j-\val(y)$ then $j-\val(y)=2$ as claimed.  

       If $\val(\lambda^2-1) >j-\val(y)$ then the image of the matrix (\ref{matrix 1}) above in $R_g$ will be $$\left( \begin{bsmallmatrix}
      a_1 & \\
     s u  & a_1 
      \end{bsmallmatrix} , \begin{bsmallmatrix}
      a_1 & \\
     u  & a_1
      \end{bsmallmatrix} \right) \pmod{\p}$$ where $s := zy^{-1}\varpi^{-2i-1} \in \OF^\times$. Conjugating by $\left( \begin{bsmallmatrix}
                  t & \\
                  & t^{-1}
                  \end{bsmallmatrix} , \begin{bsmallmatrix}
                              1 & \\
                              & 1
                              \end{bsmallmatrix} \right)$ where $t^2 \equiv s \pmod{\p}$, we obtain the claimed form of $R_g$ when $j-\val(y)=1$.

      If $\val(\lambda^2-1)=j-\val(y)=2$ then  $\val(\lambda-1)=\val(2)=1$ so $\lambda^2-1 = 4v(v+1)$ for some $v\in \OF^\times$ and the corresponding element of $R_g$ is $$\left( \begin{bsmallmatrix}
      a_1 & \\
      su + s'a_1v(v+1)  &  a_1
      \end{bsmallmatrix} , \begin{bsmallmatrix}
            a_1 & \\
           u   &  a_1
            \end{bsmallmatrix}  \right) \pmod {\p}$$ where $s':=4z\varpi^{-2i-j-1} \in \OF^\times$. Conjugating by $\left( \begin{bsmallmatrix}
            t & \\
            & t^{-1}
            \end{bsmallmatrix} , \begin{bsmallmatrix}
                        r & \\
                        & r^{-1}
                        \end{bsmallmatrix} \right)$ where $t^2\equiv  s'\pmod{\p} $ and $r^2\equiv s' s^{-1} \pmod{\p}$, we obtain the claimed form of $R_g$.

   d) Let $t=\varpi^iyx^{-1}\in \OF^\times$. For $\alpha \in F$ we have $$g\begin{bmatrix}
        1 & & & \\
        \alpha x/y & 1+\alpha & & \\
        & & 1+\alpha & \\
        & & -(1+\alpha)\alpha x/y & (1+\alpha)^2 \end{bmatrix}g^{-1} = \begin{bmatrix}
             1 & & & \\
             \alpha/t & 1+\alpha & & \\
            0 & 0 & 1+\alpha & \\
            m & 0 & -(1+\alpha)\alpha/t& (1+\alpha)^2 \end{bmatrix}$$ where $$m = \frac{\alpha x^2}{\varpi^{2i+j}y} (2+\alpha)\left(1-\frac{zy}{x^2}\right).$$

 Let $M:= j + \val(z/x^{2})  =2(i+\val(y/x)+1)+j-\val(y)-1\geq 2$. %and $M = j + \val(zx^{-2})$. 
    Suppose first that $M>2e$. Let $r\in \OF$ and take $\alpha =  \frac{ry\varpi^{2i+j+1}}{2x^2} \in \p^{M-e} \subset \p^{e+1}$. We have $\alpha/t \in \p^{e-\val(y/x)-i+1} \subset \p$ and $$m = \left( \varpi  r  \right) \left(1+\frac{\alpha}{2}\right) \left(1 - \frac{zy}{x^2} \right) \equiv \varpi r\pmod{\p^2}.$$ Thus $R_g\supset U_1$. 
    
  We may now assume $q$ is even so $e=1$ and $M=2$, so $j-\val(y)=1$. Then 
  \begin{equation} \label{matrix 2}
g\begin{bmatrix}
   A & \\
   & \lambda A'
   \end{bmatrix}g^{-1} = \begin{bmatrix}
   a_1 & \varpi^i a_2 & & \\
   \varpi^{-i}a_3 & a_4 & & \\
\varpi^{-i-j}m_1  & \varpi^{-j}m_2 &    \lambda a_1 & -\varpi^i\lambda a_2  \\
\varpi^{-2i-j}m_3  & \varpi^{-i-j}m_4 &    -\varpi^{-i}\lambda a_3 & \lambda a_4 
   \end{bmatrix} 
  \end{equation}  
   where \begin{eqnarray}
m_1&=& xa_1(1-\lambda)+ya_3 + z\lambda a_2  \nonumber\\ 
  m_2&=& xa_2(1+\lambda)+y(a_4-\lambda a_1)   \nonumber\\
   m_3  &=& xa_3(1+\lambda)+z(a_1-\lambda a_4)   \nonumber\\
     m_4  &=& xa_4(1-\lambda)+\lambda ya_3 + za_2.    \nonumber
   \end{eqnarray}        Assuming the matrix (\ref{matrix 2}) lies in $\K{}$, its image in $\K{}/\K{}^+ = \GL_{2,2}(q)$ is $$\left(  \begin{bsmallmatrix}
   a_1 & 0 \\
   \varpi^{-2i-j-1}m_3   & \lambda a_4
 \end{bsmallmatrix} , \begin{bsmallmatrix}
 a_4 & 0 \\
 \varpi^{-j}m_2 & \lambda a_1
  \end{bsmallmatrix} \right).$$ It is necessary that $a_3\in \p^{i+1}$ and $a_1, a_4, \OF^\times$ in order for the matrix (\ref{matrix 2}) to lie in $\K{}$. Let $u = zy^{-1}\varpi^{-2i-1} \in \OF^\times$, $a_3' = \varpi^{-i-1}a_3\in \OF$ and $a_2' = \varpi^{i}a_2\in \p^{i} $.  The remaining conditions to lie in $\K{}$ can be written as 
  
%  \begin{eqnarray}
%  a_1(1-\lambda) + ta_3' + tu\lambda a_2'&\in & \p^{i+j+1-\val(x)}  \nonumber\\ 
%    a_2'(1+\lambda) + \varpi t(a_4-\lambda a_1)&\in & \p^{i+j+1-\val(x)}  \nonumber\\ 
%     a_3'(1+\lambda) + \varpi t u(a_1-\lambda a_4) &\in & \p^{i+j+1-\val(x)}  \nonumber\\
%     a_4(1-\lambda) + \lambda t a_3' + tua_2'&\in & \p^{i+j+1-\val(x)} .  \nonumber 
%     \end{eqnarray} 
%  
%  
%  Since $0=\val(t) = i + \val(y/x)<i+j+1-\val(x)$, we must have $\lambda \pm 1\in  \p$. We thus rewrite our conditions as 
  
  \begin{eqnarray}
  S_1: = \frac{m_1}{\varpi xt}  =  a_1\left(\frac{1-\lambda}{\varpi t}\right) + a_3' + u\lambda a_2'&\in & \p  \nonumber\\ 
  R_1: = \frac{ m_2}{y} =    a_2'\left(\frac{1+\lambda}{ t}\right) +  a_4-\lambda a_1 &\in & \p \nonumber\\ 
 R_2:= \frac{m_3}{\varpi^{i+1}xt}=      a_3'\left(\frac{1+\lambda}{ t}\right) + u(a_1-\lambda a_4) &\in & \p  \nonumber \\
  S_2:= \frac{m_4}{\varpi xt} =    a_4\left(\frac{1-\lambda}{\varpi t}\right) + \lambda  a_3' + ua_2'&\in & \p. \nonumber
       \end{eqnarray}  In particular, we must have $\lambda-1\in \p$ so $\val(\lambda^2-1) \geq 2$. We write $$a_3' = a_1\left(\frac{\lambda-1}{ \varpi t}\right) - u\lambda a_2' + S_1 \in \OF$$  
       $$a_4 = \lambda a_1 - a_2' \left(\frac{1+\lambda}{ t}\right) + R_1 \in \OF^\times, $$ noting this implies $a_1\equiv a_4\pmod{\p}$, and plug these into the other two expressions to obtain the equalities

   $$S_2 = a_2'  \left(\frac{\lambda^2-1 }{\varpi t^2}\right) ( 1 - u\varpi t^2) + R_1 \left( \frac{1-\lambda}{\varpi t}  \right) + \lambda S_1$$
   
   $$R_2=a_1\left( \frac{\lambda^2-1}{\varpi t^2}  \right)(1-u\varpi t^2) - \lambda u R_1 + \left(\frac{1+\lambda}{t} \right)S_1$$ which yields

   $$\lambda R_2=\left( \frac{\lambda^2-1}{\varpi t^2}  \right)(1-u\varpi t^2) \left( a_1\lambda - a_2' \left(\frac{1+\lambda}{t}\right) + R_1  \right) - u R_1 + \left(\frac{\lambda+1}{t}\right)S_2.$$ Note that $ \left(\frac{\lambda+1}{t}\right)S_2\in \p^2$, so $$\frac{\lambda R_2}{\varpi} \equiv \left(\frac{\lambda^2-1}{\varpi^2t^2}\right)a_1 - \frac{uR_1}{\varpi} \pmod{\p}$$ which implies a congruence relation between $\varpi^{-2i-j-1}m_3$ and $\varpi^{-j}m_2$. The rest of the argument is similar to the previous case.    \end{proof}
   
%  Since $R_1, S_2, 1+\lambda\in \p$, we have $$\val\left( \varpi a_1\lambda - a_2' \left(\frac{1+\lambda}{t}\right) + R_1  \right)=1$$ and therefore must have %$$\val\left( \frac{\lambda^2-1}{\varpi t^2}  \right) \geq 1$$ or equivalently, 
%  $$\val(\lambda^2-1) \geq 2.$$ The rest of the argument now follows in a similar manner to the previous case. %If $\val(\lambda^2-1)>M$, then $R_2 + u\lambda R_1 \in \p^{j+1-\val(y)}$, implying the claimed form of $R_g$. By a similar argument as in the previous case, this must occur unless $M=2e$, and the claimed form of $R_g$ then also follows in a similar manner to the previous case. 

 %  If $\val(\lambda-1)>e$ then $\val(\lambda+1)=e$ and  $\val(\lambda^2-1) =\val(\lambda-1)+\val(\lambda+1)>2e \geq M$. If $ \val(\lambda-1)<e$ then $\val(\lambda+1) = \val(\lambda-1)$ and $M\leq \val(\lambda^2-1) = 2\val(\lambda-1) <2e$, hence $M$ is odd by assumption and so $M<\val(\lambda^2-1)$. Finally, suppose $\val(\lambda-1)=e$, so $\lambda-1 = 2v$ for some $v\in \OF^\times$ and $\val(\lambda^2 -1) \geq 2e \geq M$. So if $\val(\lambda^2-1)=M$ then $M = 2e$. The claimed form of $R_g$ follows in a similar manner to the previous case. 

\section{Counting double cosets}

We must enumerate the elements of $\Supp(\pi)$. Since $[g]$ determines $R_g$ up to $\K{}/\K{}^+$-conjugacy, it follows from Lemma \ref{reducing to the levi},  Lemma \ref{keyLemma}, and some tedious matrix computations that there are no equalities between double cosets of different types, and that $[t_{i,j}S(x,y,z)]\in \Supp(\pi)$ determines $i\geq 0$ and $j\geq 1$, as well as $\val(x)$ and $\val(y)$ in those cases when $x\neq0$ and/or $y\neq 0$. Since $[t_{i,j}S(x,0,0)]=[t_{i,j}S(\varpi^{\val(x)},0,0)]$ if $x\neq0$ by using integral diagonal matrices, these considerations quickly yield the following two results.

% Thanks to Lemma \ref{reducing to the levi}, we ha ve $[g]= [g']\in \Supp(\pi)$ if and only if $g' (\Si{n}\cap M)g^{-1}\cap N_G(\K{})$ is nonempty. Using this, it is not hard to verify that $[t_{i,j}]$ determines $i$ and $j$ when $i\geq 0$ and $j\geq1$. Hence we have the following result. 

%We make the following observations. First, the double coset $$N_G(\K{}) t_{i,j}K = \left(\K{} t_{i,j}ZK \right)\bigcup \left(\K{} u_1 t_{i,j} ZK\right)$$ determines $i$ and $j$ if $i\geq 0$ and $j\geq 1$, as can be verified by direct computation. It follows that $[t_{i,j}S(x,y,z)]$ determines $i$ and $j$. Next, it is not hard to show that none of the different types of double cosets can be equal (for example, because the associated subgroups $R_g$ are not conjugate), and that $[t_{i,j}S(x,y,z)]$ also determines $\val(x)$ and $\val(y)$, for the types where $x\neq 0$ and/or $y\neq 0$. Using integral diagonal matrices, we may assume that $x$ and $z$ are zero or powers of $\varpi$. The only ambiguity that remains is counting the number of distinct double cosets of the form $[t_{i, j}S(0, u\varpi^k, \varpi^{2i+1+k})]$ and of $[t_{i,j} S(\varpi^r, u\varpi^k  ,\varpi^{2i+1+k})]$, where $u\in \OF^\times / (1+\p^{j-k})$. 

%THIS WORKS FOR ALL $n\geq 0$
\begin{lemma}\label{diagonal double coset count}
The number of double cosets of type I is $$\left\lfloor  \frac{(n-1)^2}{4}\right\rfloor. $$
\end{lemma}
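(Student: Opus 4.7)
The plan is to combine the necessary conditions from Lemma \ref{first support inequalities} with the distinctness remark that immediately precedes the statement, then evaluate the resulting sum.

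First, I would observe that by Lemma \ref{first support inequalities}(a), a type I coset $[t_{i,j}]$ with $i\geq 0$ lies in $\Supp(\pi)$ only when $1\leq j\leq n-2-2i$; in particular, the parameter $i$ is bounded by $0\leq i\leq \lfloor(n-3)/2\rfloor$ (since $j\geq 1$ forces $2i\leq n-3$). The preamble paragraph to the lemma asserts that $[t_{i,j}S(x,y,z)]\in \Supp(\pi)$ determines $i\geq 0$ and $j\geq 1$, so distinct admissible pairs $(i,j)$ give distinct double cosets. Thus the count is exactly
\[
N(n) := \sum_{i=0}^{\lfloor(n-3)/2\rfloor}(n-2-2i).
\]

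Next I would evaluate $N(n)$ by splitting on the parity of $n$. Writing $n=2m+1$ when $n$ is odd, the sum runs over $0\leq i\leq m-1$ and gives an arithmetic progression with first term $2m-1$, last term $1$, and $m$ terms, for a total of $m^2=\bigl(\tfrac{n-1}{2}\bigr)^2$. Writing $n=2m$ when $n$ is even, the sum runs over $0\leq i\leq m-2$ and gives an arithmetic progression with first term $2m-2$, last term $2$, and $m-1$ terms, for a total of $m(m-1)$. In either case this matches $\lfloor (n-1)^2/4\rfloor$.

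The proof is essentially bookkeeping; the only potential obstacle is justifying that no further identifications occur among the $[t_{i,j}]$ with $i\geq 0$ and $1\leq j\leq n-2-2i$. This is handled by appealing to the discussion preceding the lemma, which reduces the matter to the fact that the cosets $R_{t_{i,j}}$ determined in Lemma \ref{keyLemma}(a), together with the diagonal form of $t_{i,j}$, encode the valuations $2i+j$, $i+j$, $i$, $0$ of its entries (these are recovered up to $\K{}/\K{}^+$-conjugacy and hence $(i,j)$ is uniquely determined).
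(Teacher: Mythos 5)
Your proposal is correct and takes essentially the same approach as the paper: both count the lattice points $(i,j)$ with $i\geq 0$ and $1\leq j\leq n-2-2i$, using the preceding distinctness discussion to rule out coincidences. The only cosmetic difference is that you sum over $i$ first and split by parity, whereas the paper sums over $j$ first and closes with the identity $\sum_{j=1}^{n-1}\lfloor j/2\rfloor=\lfloor(n-1)^2/4\rfloor$; the arithmetic in both is correct.
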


\begin{proof}
 Compute $\sum\limits_{j=1}^{n-2}\sum\limits_{i=0}^{ \left\lfloor \frac{n-j-2}{2} \right\rfloor  } (1) = \sum\limits_{j=1}^{n-2}  \left\lfloor \frac{n-j}{2} \right\rfloor= \sum\limits_{j=2}^{n-1} \left\lfloor \frac{j}{2}\right\rfloor  =  \left\lfloor  \frac{(n-1)^2}{4} \right\rfloor .$ 
\end{proof}

%Since the conjugacy class of $R_g$ is an invariant of $[g]$, it follows from Lemma \ref{keyLemma} that if $[t_{i,j}S(x,y,z)]=[t_{i',j'}]\in \Supp(\pi)$ then we may take $y=z=0$. It is then straightforward to show by direct matrix computation that double cosets of type II are distinct from those of type I, and that for type II the double coset $[t_{i,j}S(x,0,0)] =[t_{i,j}S(\varpi^{\val(x)},0,0)] $ (using diagonal matrices) determines $i, j$ and $\val(x)$. Hence we obtain the following count for those of type II. 

%THIS WORKS FOR ALL $n\geq 0$
\begin{lemma}\label{type II count} If $q$ is even then the number of double cosets of type II % the form $[t_{i,j}S(\varpi^)]\in \Supp(\pi)$ 
is $$\begin{cases}
0 & \text{ if } n\leq 3 \\
 \left\lfloor \frac{(n-2)^2}{4}\right\rfloor& \text{ if } n\geq 4 
\end{cases}.$$
 
\end{lemma}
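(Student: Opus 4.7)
The plan is to read off the parametrization of type II cosets directly from Lemma \ref{keyLemma}(b), combine this with the support constraints from Lemma \ref{first support inequalities}, and evaluate an elementary arithmetic sum.

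First I would invoke Lemma \ref{keyLemma}(b): a type II coset $[t_{i,j}S(x,0,0)]$ lies in $\Supp(\pi)$ precisely when $q$ is even and $\val(x) = i+j$. The remarks preceding Lemma \ref{diagonal double coset count} supply two facts I will use: the identity $[t_{i,j}S(x,0,0)] = [t_{i,j}S(\varpi^{\val(x)},0,0)]$ via an integral diagonal conjugation, and the statement that the double coset $[t_{i,j}S(x,y,z)]$ determines $(i,j,\val(x),\val(y))$. Together these mean that enumerating type II cosets in $\Supp(\pi)$ reduces to counting admissible pairs $(i,j)$, each of which contributes exactly one coset (represented for instance by $(x,y,z)=(\varpi^{i+j},0,0)$).

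Next I would determine the admissible range of $(i,j)$. Lemma \ref{first support inequalities}(a) contributes $i \geq 0$ and $1 \leq j \leq n-2-2i$, while part (b) imposes $\val(x^2) \geq 2i+j+2$. Substituting the type II condition $\val(x)=i+j$ into this last inequality gives $2(i+j) \geq 2i+j+2$, i.e.\ $j \geq 2$. So the admissible pairs are exactly those with $i \geq 0$ and $2 \leq j \leq n-2-2i$, and the count becomes
\[
\sum_{i \geq 0} \max\!\bigl(0,\, n-3-2i\bigr).
\]

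Finally I would evaluate this sum. It is empty (hence zero) for $n \leq 3$, yielding the first case of the lemma. For $n \geq 4$, setting $m = \lfloor (n-4)/2 \rfloor$, the sum is a finite arithmetic progression equal to $(m+1)(n-3-m)$, and a short case distinction on the parity of $n$ (as in the proof of Lemma \ref{diagonal double coset count}) verifies this equals $\lfloor (n-2)^2/4 \rfloor$. No step is genuinely difficult; the subtlest point is the extraction of the hidden constraint $j \geq 2$, which is not written in Lemma \ref{keyLemma}(b) itself but arises from combining its conclusion $\val(x)=i+j$ with the standing hypothesis $\val(x^2)\geq 2i+j+2$ from Lemma \ref{first support inequalities}(b).
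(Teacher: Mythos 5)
Your proposal is correct and follows essentially the same route as the paper's proof: both extract the constraint $j \geq 2$ by substituting $\val(x)=i+j$ (from Lemma \ref{keyLemma}(b)) into the inequality $\val(x^2)\geq 2i+j+2$ of Lemma \ref{first support inequalities}(b), then count pairs $(i,j)$ with $i\geq 0$ and $2\leq j\leq n-2-2i$ via the sum $\sum_{i=0}^{\lfloor (n-4)/2\rfloor}(n-3-2i)=\lfloor (n-2)^2/4\rfloor$. The appeal to the remarks preceding Lemma \ref{diagonal double coset count} for the one-coset-per-$(i,j)$ parametrization is exactly what the paper does implicitly.
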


\begin{proof} Since $e=1$ we must have $i+j=\val(x)\geq i+1+\frac{j}{2}$. So the relevant inequalities are $2\leq j\leq  n-2-2i$ and $0\leq i\leq \left\lfloor \frac{n-4}{2} \right\rfloor$.  Compute $  \sum\limits_{i=0}^{\left\lfloor \frac{n-4}{2} \right\rfloor }(n-3-2i) =  \left\lfloor \frac{(n-2)^2}{4}\right\rfloor $. 
\end{proof}
  
Suppose now that $g = t_{i,j}S(0,y,z)$ is type III (so $q$ is even). Using integral diagonal matrices, we may assume that $z$ is a power of $\varpi$. So the only ambiguity that remains is counting the number of distinct double cosets of the form $[t_{i, j}S(0, u\varpi^r, \varpi^{2i+1+r})]$, where $u\in \OF^\times / (1+\p^{j-r})$ and $i, j, r$ are fixed. Using integral diagonal matrices again, it is easy to see that we may adjust $u$ by squares. A tedious matrix computation shows that there are no additional equalities, so that $(i, j, r, u)$ form a complete set of invariants if $u\in \OF^\times /\OF^{\times2} (1+\p^{j-r})$. In particular, there are $q^{\lfloor \frac{j-r}{2} \rfloor }$ such double cosets for fixed $i, j, r$.

\begin{lemma} \label{type IIIa count}  
If $q$ is even and $n\geq 2$ then the number of double cosets of type IIIa is   $$\begin{cases}
0 & \text{ if } n\leq 3 \\
\left\lfloor \frac{(n-3)^2}{4} \right\rfloor.& \text{ if } n\geq 4 
\end{cases}.$$

\end{lemma}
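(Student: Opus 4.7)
The plan is to invoke the general parametrization of type III double cosets established in the discussion immediately preceding the lemma: every such double coset admits a representative of the form $[t_{i,j}S(0, u\varpi^r, \varpi^{2i+1+r})]$, and for fixed $(i,j,r)$ the distinct classes are parametrized by $u\in \OF^\times/\OF^{\times 2}(1+\p^{j-r})$, giving $q^{\lfloor (j-r)/2\rfloor}$ representatives. For type IIIa we have $j-\val(y)=1$, hence $r=j-1$ and $j-r=1$, so the inner count collapses to $q^0=1$. Equivalently, since $q$ is even, squaring is a bijection on $\Fq^\times$ and therefore $\OF^{\times 2}(1+\p)=\OF^\times$, so a single value of $u$ suffices for each admissible pair $(i,j)$.

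Next I would pin down the admissible range of $(i,j)$ using Lemma \ref{first support inequalities}: the conditions $i\geq 0$, $1\leq j\leq n-2-2i$, and $\val(y^2)\geq j+1$ become, after substituting $\val(y)=j-1$ and using that $q$ is even so that $\val(y^2)=2(j-1)$, simply $i\geq 0$ and $3\leq j\leq n-2-2i$.

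It then remains to evaluate
\[
\sum_{i=0}^{\lfloor (n-5)/2\rfloor}(n-4-2i),
\]
which is an empty (hence zero) sum for $n\leq 4$ and, by the same style of arithmetic-series manipulation used in the proofs of Lemma \ref{diagonal double coset count} and Lemma \ref{type II count}, evaluates to $\lfloor(n-3)^2/4\rfloor$; since this expression already equals $0$ at $n=3$ and $n=4$, the two cases in the statement are recovered. I foresee no substantive obstacle beyond bookkeeping, the only delicate point being the reduction of the $u$-count to a single class, which is secured by the characteristic-two remark above.
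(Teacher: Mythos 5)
Your proposal is correct and follows the same route as the paper: use the parametrization giving $q^{\lfloor (j-r)/2\rfloor}=1$ classes of $u$ for $j-r=1$, reduce the range constraints to $3\leq j\leq n-2-2i$, $i\geq 0$, and evaluate the resulting double sum. One small slip in the write-up: the identity $\val(y^2)=2\val(y)$ is just multiplicativity of the valuation and has nothing to do with $q$ being even (the parity of $q$ is only used to guarantee that type IIIa double cosets occur at all, via Lemma \ref{keyLemma}, and to collapse the $u$-count).
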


%The number of double cosets of the form $[t_{i,j} S(0, \varpi^{j+1-2k}, u\varpi^{2i+2+j-2k} ) ]$ is as follows. 
\begin{proof} Since $e=1$ and $j-r$ is odd we must have $j-1=r\geq \frac{j+1}{2}$ so the relevant inequalities are $3\leq j\leq n-2-2i$ and $0\leq i\leq \left\lfloor \frac{n-5}{2}\right\rfloor $.  Compute
$ \sum\limits_{i=0}^{ \lfloor \frac{n-5}{2} \rfloor  } (n-4-2i)
%&=& \sum\limits_{k=1}^{M} q^{k-1}\left\lfloor \frac{(n+1-4k)^2}{4} \right\rfloor\nonumber \\\nonumber\\
 =    \left\lfloor \frac{(n-3)^2}{4} \right\rfloor$. 
\end{proof}

%The number is $$\sum\limits_{j=2}^{n-2} \sum\limits_{i=0}^{ \left\lfloor \frac{n-j-2}{2} \right\rfloor  } \sum\limits_{k=1}^{\min(e, \left\lfloor j/2\right\rfloor)} (1) = \sum\limits_{j=2}^{n-2}\left\lfloor \frac{n-j}{2} \right\rfloor \min(e, \left\lfloor j/2\right\rfloor).$$ If $2e+3 \geq n$ then $e\geq \left\lfloor j/2 \right\rfloor $ and the sum equals $\sum\limits_{j=2}^{n-2}\left\lfloor \frac{n-j}{2} \right\rfloor \left\lfloor \frac{j}{2}\right\rfloor = \frac{n-1}{6}\left\lfloor \frac{n(n-2)}{4} \right\rfloor $. If $2e+3\leq  n$ then the sum becomes 
%\begin{eqnarray}
%\sum\limits_{j=2}^{2e-1}\lfloor  \frac{n-j}{2} \rfloor \lfloor \frac{j}{2}\rfloor + \sum\limits_{j=2e}^{n-2}\lfloor \frac{n-j}{2} \rfloor e & = &\sum\limits_{k=1}^{e-1}\left( \lfloor \frac{n}{2} \rfloor  + \lfloor \frac{n-1}{2} \rfloor -2k \right)k  + e\sum\limits_{l=2}^{n-2e}\lfloor \frac{l}{2}\rfloor  \nonumber  \\
%& = &  \frac{(n-1)(e-1)e}{2} - \frac{(e-1)e(2e-1)}{3} + e \left\lfloor \frac{(n-2e)^2}{4} \right\rfloor \nonumber \\
%& =& \frac{e(e-1)(3n-4e-1)}{6} + e \left\lfloor \frac{(n-2e)^2}{4} \right\rfloor \nonumber 
%\end{eqnarray} 

\begin{lemma} \label{type IIIb count}
If $q$ is even then the number of double cosets of type IIIb is $$\begin{cases}
0 & \text{ if } n\leq 3 \\
q \left\lfloor \frac{(n-5)^2}{4} \right\rfloor& \text{ if } n\geq 4 
\end{cases}.$$ %\begin{cases}
%0 & \text{ if } n\leq  4e+2  \\
%q^e\cdot \left\lfloor \frac{(n-4e-1)^2}{4} %\right\rfloor  & \text{ if }  n\geq  4e+3
%\end{cases}$$
\end{lemma}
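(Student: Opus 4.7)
The plan is to apply the parametrization of type III double cosets recorded in the paragraph immediately preceding Lemma \ref{type IIIa count}: each such coset has a representative $[t_{i,j}S(0,u\varpi^r,\varpi^{2i+1+r})]$ with $r=\val(y)$, and the complete set of invariants is $(i,j,r,u)$ with $u\in\OF^\times/\OF^{\times 2}(1+\p^{j-r})$. For each admissible triple $(i,j,r)$ this produces exactly $q^{\lfloor(j-r)/2\rfloor}$ double cosets. Since the parametrization is already shown to be complete, no further coincidences among double cosets need to be ruled out; the proof reduces to a counting argument.

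Type IIIb is distinguished within type III by the condition $j-\val(y)=2$ from Lemma \ref{keyLemma}(c), so $r=j-2$; substituting, the multiplicity becomes $q^{\lfloor 2/2\rfloor}=q$ per valid triple $(i,j,j-2)$. Next I would determine the admissible range of $(i,j)$ from the standing constraints of Lemma \ref{first support inequalities}, namely $i\geq 0$, $j\leq n-2-2i$, and $\val(y^2)\geq j+1$. With $r=j-2$, the last inequality becomes $2(j-2)\geq j+1$, forcing $j\geq 5$. For each such $i$, the variable $j$ then ranges over the integers in $[5,n-2-2i]$, contributing $n-6-2i$ values, and these exist exactly when $0\leq i\leq\lfloor(n-7)/2\rfloor$.

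The count therefore equals
$$q\sum_{i=0}^{\lfloor(n-7)/2\rfloor}(n-6-2i),$$
and a brief floor manipulation analogous to those carried out in Lemmas \ref{diagonal double coset count}, \ref{type II count}, and \ref{type IIIa count} identifies this sum with $q\lfloor(n-5)^2/4\rfloor$ (checking the two residues of $n$ mod $2$ separately). For $n\leq 6$ the sum is empty and the floor is zero, so in particular the $n\leq 3$ clause is recovered automatically. The only conceivable obstacle is an off-by-one error in the range of $i$ or $j$; this is easily audited against the small cases $n=7$ (one triple, yielding $q$ cosets, matching $q\lfloor 1\rfloor$) and $n=8$ (one triple, yielding $q$ cosets, matching $q\lfloor 9/4\rfloor$), after which the rest of the proof is mechanical arithmetic.
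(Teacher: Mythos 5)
Your argument is correct and takes essentially the same route as the paper: parametrize type IIIb cosets by $(i,j,r,u)$ with $r=j-2$ and $u\in\OF^\times/\OF^{\times2}(1+\p^2)$ (giving $q$ cosets per $(i,j)$), derive $j\geq 5$ from $\val(y^2)\geq j+1$, and sum over the admissible $(i,j)$ range; the paper just sums over $j$ as the outer index rather than $i$. One minor slip in your sanity audit: for $n=8$ there are two admissible pairs, $(i,j)=(0,5)$ and $(0,6)$, giving $2q$ cosets, which matches $q\lfloor 9/4\rfloor=2q$ — your statement that there is only one triple yielding $q$ cosets is inconsistent with both the correct count and the very formula you cite.
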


\begin{proof}
The conditions $j=2+r\leq \min(n-2, 2r-1)$ can only be satisfied if $n\geq 7$. If $n\geq 7$ then the number of double cosets is $\sum\limits_{j=5}^{n-2} \sum\limits_{i=0}^{ \lfloor \frac{n-j-2}{2} \rfloor  } | \OF^\times/\OF^{\times 2}(1+\p^{2})  |  =  q \sum\limits_{j=5}^{n-2} \left\lfloor \frac{n-j}{2} \right\rfloor 
 =  q  \left\lfloor \frac{(n-5)^2}{4} \right\rfloor$.\end{proof}
%see Ralf note, page 8

Finally, suppose $g=t_{i,j} S(x, y ,z)$ is type IV (so $q$ is even). Using integral diagonal matrices, we may assume that $x$ and $z$ are powers of $\varpi$. So the only ambiguity that remains is counting the number of distinct double cosets of the form $[t_{i,j} S(\varpi^k, u\varpi^r  ,\varpi^{2i+1+r})]$, where $u\in \OF^\times / (1+\p^{j-r})$ and $i, j, k, r$ are fixed. Observe that the conditions in Lemma \ref{keyLemma} part d) ensure that $k=i+j-1$ and $r=j-1$, so in particular we have $u\in \OF^\times/(1+\p)$. A tedious matrix computation shows that if $[t_{i,j} S(\varpi^{i+j-1}, u\varpi^{j-1}  ,\varpi^{2i+j})] = [t_{i,j} S(\varpi^{i+j-1}, u'\varpi^{j-1}  ,\varpi^{2i+j})] $ then $u-u'\in \p$. Thus there are $q-1$ double cosets for fixed $i, j$.

\begin{lemma} \label{type IVb count}
If $q$ is even then the number of double cosets of type IV is $$\begin{cases}
0 & \text{ if } n\leq 3\\
(q-1) \left\lfloor \frac{(n-4)^2}{4}\right\rfloor& \text{ if } n\geq 4
\end{cases}.$$ 
\end{lemma}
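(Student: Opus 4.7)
The plan is to combine the parametrization of type IV double cosets established in the paragraph immediately preceding the lemma with the admissibility inequalities from Lemmas~\ref{first support inequalities} and~\ref{keyLemma}~d), and then evaluate a sum of the same shape as in the earlier counting lemmas. The preceding paragraph already does the heavy lifting: for each admissible pair $(i,j)$, the type IV double cosets are in bijection with $\OF^\times/(1+\p)$, so there are exactly $q-1$ of them for each valid $(i,j)$.

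It remains to pin down the admissible range of $(i,j)$. From Lemma~\ref{keyLemma}~d) together with the equalities $\val(x)=i+j-1$, $\val(y)=j-1$, and $\val(z)=2i+j$ recorded in the preceding paragraph, the constraint $\val(x^2)\geq 2i+j+2$ from Lemma~\ref{first support inequalities}~b) becomes $2(i+j-1)\geq 2i+j+2$ and forces $j\geq 4$; note that this is strictly stronger than what $\val(y^2)\geq j+1$ alone would give, and this is the step that warrants the most care. The bound $j\leq n-2-2i$ from Lemma~\ref{first support inequalities}~a) then gives $0\leq i\leq \lfloor(n-6)/2\rfloor$, which requires $n\geq 6$; for $n\leq 5$ no admissible pair exists, so the count is zero, consistent with $(q-1)\lfloor(n-4)^2/4\rfloor=0$ (in particular covering the trivial range $n\leq 3$ stated in the lemma).

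Hence the total count becomes
$$(q-1)\sum_{i=0}^{\lfloor(n-6)/2\rfloor}(n-5-2i).$$
A parity case-split on $n$ (identical in spirit to the one in Lemma~\ref{type IIIa count}) evaluates this to $(q-1)\lfloor(n-4)^2/4\rfloor$: after reindexing by $m=n-4$ the inner sum reduces to a sum of consecutive odd or consecutive even positive integers, yielding $\lfloor m^2/4\rfloor$ in either case.

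I do not expect any real obstacle here; the delicate matrix computation that cuts the $u$-parameter space down to $\OF^\times/(1+\p)$ has been dispatched in the paragraph before the statement, and what remains is purely bookkeeping, the only vigilance being the verification that the sharper bound $j\geq 4$ supersedes the weaker bound $j\geq 3$ that would come from $\val(y^2)\geq j+1$ by itself.
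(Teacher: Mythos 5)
Your proof is correct and follows the same route as the paper: the paper likewise derives $j\geq 4$ from $i+j-1=\val(x)\geq i+1+\tfrac{j}{2}$ (the same inequality you obtain from $\val(x^2)\geq 2i+j+2$), fixes $0\leq i\leq\lfloor(n-6)/2\rfloor$, multiplies by the $q-1$ per-pair count from the preceding paragraph, and evaluates $(q-1)\sum_{i=0}^{\lfloor(n-6)/2\rfloor}(n-5-2i)=(q-1)\lfloor(n-4)^2/4\rfloor$. No gaps.
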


\begin{proof} From the above considerations, and since $i+j-1=\val(x)\geq i+1+\frac{j}{2}$, the relevant inequalities are $4\leq j\leq n-2-2i$ and $0\leq i\leq \lfloor \frac{n-6}{2} \rfloor$. Since there are $q-1$ double cosets for each $i,j$ our count is 
$(q-1) \sum\limits_{i=0}^{\lfloor  \frac{n-6}{2}\rfloor } (n-5-2i)
%& =&  \sum\limits_{k=0}^{N-1}  \left(\left\lfloor \frac{m}{2} \right\rfloor -k\right) \left(\left\lceil \frac{m}{2}\right\rceil -k\right)q^k \nonumber \\
=  (q-1)    \left\lfloor \frac{(n-4)^2}{4}\right\rfloor$.\end{proof}

%$$\dim \pi^{\Si{n}} =  \left\lfloor \left(\frac{n-1}{2}\right)^2 \right\rfloor 
%+\left\lfloor \left(\frac{n-2}{2}\right)^2 \right\rfloor 
%+ \left\lfloor \left(\frac{n-4}{2}\right)^2 \right\rfloor  
%+ 2\left\lfloor \left(\frac{n-5}{2}\right)^2 \right\rfloor 
%+ \frac{1}{2}\left\lfloor \left(\frac{n-3}{2}\right) \right\rfloor \left\lfloor \left(\frac{n-1}{2}\right) \right\rfloor   $$

% old formula, think is wrong now %$$\dim \pi^{\Si{n}} = 1+(n-4)^2+ \left\lfloor \frac{(n-1)^2}{4} \right\rfloor
%+ \frac{1}{2}\left\lfloor \left(\frac{n-3}{2}\right) \right\rfloor \left\lfloor \left(\frac{n-1}{2}\right) \right\rfloor   $$

\section{Main dimension formulas}

If $\tau|_{\K{}} = \sigma\oplus \sigma^{u_1}$ %(which is equivalent to $\sigma \neq \sigma^{u_1}$ and to $\tau$ being induced from $\sigma$), which is the necessary and sufficient condition for $\pi$ to lie in an $L$-packet with a generic supercuspidal representation, namely one of the form $X_5(\Lambda, \omega)$ or $\chi_4(k, l)$, 
 then $\dim \tau^{R_g} =  \dim\sigma^{R_g} + \dim(\sigma^{u_1})^{R_g}$. If $\tau|_{\K{}} = \sigma $ (which is equivalent to $\sigma = \sigma^{u_1}$), then either $\sigma = [\rho\boxtimes \lambda_0\rho]$, in which case $\dim \tau^{R_g} = \dim \sigma^{R_g}$, or (when $q$ is odd) $\sigma \subsetneq [\rho\boxtimes \lambda_0\rho]$, in which case $\dim \tau^{R_g} = \dim \sigma^{R_g} $ is 1 or 0 if $q\equiv 3\pmod 4$ or $q\equiv 1\pmod 4$, respectively.  We know the contribution of the diagonal double cosets for these nongeneric representations, and these are the only double cosets when $q$ is odd, so we have the following result. 
 
 \begin{theorem}\label{Odd q dimension formula}
Let $\pi = \cInd_{N_G(\K{})}^G(\tau) $ be an irreducible depth zero supercuspidal representation with trivial central character. Let $\sigma$ be an irreducible constituent of $\tau|_{\K{}}$ and of $[\rho_1\boxtimes \rho_2]$.  

\begin{itemize}
\item  Suppose $q$ is odd. If $\omega_\sigma\neq 1$ then $\dim \pi^{\Si{n}}=0$. If $\omega_\sigma=1$ then $$\dim \pi^{\Si{n}} =\left\lfloor \frac{(n-1)^2}{4} \right\rfloor \cdot \begin{cases}
4 & \text{ if }  \sigma \neq \sigma^{u_1}\\
2 & \text{ if }  \sigma = \sigma^{u_1}=[\rho_1\boxtimes \rho_2]\\
1 & \text{ if }  \sigma \subsetneq [\rho_1\boxtimes \rho_2]
 \end{cases}.$$ 
 
 \item Suppose $q$ is even. For $n\geq 0$ define $$F(n,q):=\begin{cases}
  0 & \text{ if } n\leq 2\\
 1 & \text{ if } n=3 \\
 2n-5 + q(\left\lfloor\frac{3n^2+1}{4}\right\rfloor-6n+12) & \text{ if } n\geq 4
  \end{cases}.$$Then $$\dim\pi^{\Si{n}}= F(n, q) \cdot \begin{cases}
 2 & \text{ if }  \sigma \neq \sigma^{u_1}\\
 1 & \text{ if }  \sigma = \sigma^{u_1}
  \end{cases}.$$ 
\end{itemize}
 %The third case can occur if and only if $q\equiv 3\pmod 4$. %, the first if and only if $q\geq 5$. I THINK THIS PART WAS WRONG - JUST TAKE $\rho\boxtimes \rho_0$ where $\rho$ is the unique cuspidal irrep of $GL_2(3)$ which is irreducible when restricted to $SL_2(3)$ and $\rho_0$ is the one which is reducible upon that restriction
 \end{theorem}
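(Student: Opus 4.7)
The plan is a direct assembly of the pieces already proved in Sections 3 and 4. Starting from
$$\dim\pi^{\Si{n}}=\sum_{[g]\in\Supp(\pi)}\dim\tau^{R_g}$$
(noted at the top of Section 3), I will split the sum along the five types of double cosets classified in Lemma \ref{keyLemma}, substitute the enumerations from Lemmas \ref{diagonal double coset count}, \ref{type II count}, \ref{type IIIa count}, \ref{type IIIb count}, \ref{type IVb count}, and insert the dimensions $\dim\sigma^{R_g}$ computed in Lemma \ref{character table computation lemma}. The outer factor $2$ or $1$ reflects whether $\tau|_{\K{}}=\sigma\oplus\sigma^{u_1}$ or $\tau|_{\K{}}=\sigma$, i.e., whether $\sigma\neq\sigma^{u_1}$ or $\sigma=\sigma^{u_1}$.

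For odd $q$, Lemma \ref{keyLemma} shows $\Supp(\pi)$ consists entirely of type I cosets, and moreover only when $\omega_\sigma=1$; this settles the vanishing case $\omega_\sigma\neq 1$. When $\omega_\sigma=1$, evaluating the central character on $(I_2,-I_2)\in Z(\GL_{2,2}(q))$ forces $\omega_{\rho_i}(-1)=1$, so Lemma \ref{character table computation lemma}(a) gives $\dim\sigma^{R_{t_{i,j}}}=2$ in the ``full'' case $\sigma=[\rho_1\boxtimes\rho_2]$, while Lemma \ref{character table computation lemma}(b) gives $\dim\sigma^{R_{t_{i,j}}}=1$ in the subrepresentation case (the value $0$ when $q\equiv 1\pmod 4$ is accounted for implicitly by requiring $\omega_\sigma=1$). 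Multiplying by the count $\lfloor(n-1)^2/4\rfloor$ from Lemma \ref{diagonal double coset count} and by the $1$-or-$2$ multiplicity of $\sigma$ in $\tau|_{\K{}}$ yields the three outer constants $4$, $2$, $1$.

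For even $q$, all five types may contribute. Parts (a) and (d) of Lemma \ref{character table computation lemma} give $\dim\sigma^{R_g}=1$ for types I, II, IIIb, IV, and part (c) gives $\dim\sigma^{R_g}=q-1$ for type IIIa. Summing contributions yields, for $n\geq 4$,
\[
\Bigl\lfloor\tfrac{(n-1)^2}{4}\Bigr\rfloor+\Bigl\lfloor\tfrac{(n-2)^2}{4}\Bigr\rfloor+(q-1)\Bigl\lfloor\tfrac{(n-3)^2}{4}\Bigr\rfloor+q\Bigl\lfloor\tfrac{(n-5)^2}{4}\Bigr\rfloor+(q-1)\Bigl\lfloor\tfrac{(n-4)^2}{4}\Bigr\rfloor,
\]
which I will verify equals $F(n,q)$ by separating according to $n\bmod 2$ so each floor becomes an explicit quadratic in $n$. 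The boundary cases $n\leq 3$ are immediate: Lemmas \ref{type II count}, \ref{type IIIa count}, \ref{type IIIb count}, \ref{type IVb count} each give $0$, and type I contributes $0,0,0,1$ for $n=0,1,2,3$, matching $F(n,q)$. A final factor of $2$ or $1$ from $\tau|_{\K{}}$ completes the formula.

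The only nonroutine step is the algebraic simplification to $F(n,q)$: once each floor is replaced by the appropriate parity-dependent polynomial and terms are collected, the expression rearranges into $2n-5+q(\lfloor(3n^2+1)/4\rfloor-6n+12)$. Everything else is a direct combination of previously established lemmas.
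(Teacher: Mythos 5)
Your proposal is correct and follows essentially the same route as the paper: the paper's proof of this theorem is just a one-line citation of the preceding remarks, the character-table lemma, and the double-coset-counting lemmas, followed by ``some routine algebra,'' which is exactly what you supply (including the correct observations that the $q\equiv 1\pmod 4$ case of Lemma \ref{character table computation lemma}(b) is subsumed by $\omega_\sigma\neq 1$, and the verification that the five floor terms collapse to $F(n,q)$). The one point to be careful about is that you quote the Mackey identity as $\sum\dim\tau^{R_g}$, whereas the paper writes $\sum\dim\sigma^{R_g}$; your version is the correct general statement, and the outer multiplicative factor of $1$ or $2$ is what accounts for $\dim\tau^{R_g}=\dim\sigma^{R_g}+\dim(\sigma^{u_1})^{R_g}$ when $\sigma\neq\sigma^{u_1}$ (which works because $u_1$-conjugation preserves each relevant $R_g$ up to $\K{}/\K{}^+$-conjugacy, so the two summands are equal).
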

 
 \begin{proof}
 a) This follows from the preceding remarks, the character table computations above, together with Lemmas \ref{keyLemma} and \ref{diagonal double coset count}.

b)  This follows from the preceeding remarks, the character table computations above, Lemmas \ref{diagonal double coset count}, \ref{type II count}, \ref{type IIIa count}, \ref{type IIIb count}, %\ref{type IVa count}, 
\ref{type IVb count}, and some routine algebra. 
% \begin{eqnarray}
%F(n, q, e) &=& \left\lfloor  \frac{(n-1)^2}{4}  \right\rfloor + m(n,e) + m_2(n,q, e) + m_4(n,q,e) \nonumber \\ 
%&+&   (q-1)(m_1(n,q,e)+m_3(n,q, e) )\nonumber.
% \end{eqnarray} Plugging the expressions in gives the formula after some routine algebra. 
\end{proof}

\begin{corollary}
Suppose $\pi = \cInd_{N_{G}(\K{}) }^G (\tau)$ is a depth zero (nongeneric) supercuspidal representation of $\GSp(4, \Q_2)$ with trivial central character; there are two such $\pi$, both arising from the unique cuspidal representation of $\GL_{2,2}(2)$. We have $$\dim \pi^{\Si{n}} = \begin{cases}
0 & \text{ if } n\leq 2\\
1 & \text{ if } n=3\\
\left\lfloor \frac{3n^2-20n + 39}{2} \right\rfloor  & \text{ if } n\geq 4
\end{cases}.$$ 
\end{corollary}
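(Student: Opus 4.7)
The corollary specializes Theorem~\ref{Odd q dimension formula} to $q=2$. My first step is to identify the relevant representations. The group $\GL_2(2)$ admits a unique irreducible cuspidal representation $\rho$, namely the one-dimensional character attached to a nontrivial character of $\mathbb{F}_4^\times$; since any $\rho^w$ would again be cuspidal, uniqueness forces $\rho \cong \rho^w$. Because $\omega_\rho$ is necessarily trivial on $\mathbb{F}_2^\times=\{1\}$, the central character condition $\omega_{\rho_1}\omega_{\rho_2}=1$ is automatic, and the unique cuspidal representation of $\GL_{2,2}(2)$ is $\sigma=[\rho\boxtimes\rho]$. The $u_1$-action on $\GL_{2,2}(2)$---swap factors, then conjugate by $(w,w)$---therefore fixes $\sigma$, i.e.\ $\sigma=\sigma^{u_1}$.

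The two representations $\pi$ in the statement correspond to the two distinct extensions of $\sigma$ from $\K{}$ to $N_G(\K{})$: the quotient $N_G(\K{})/Z(G)\K{}$ has order $2$, and with trivial central character prescribed on $Z(G)$ there are exactly two choices of eigenvalue for $\tau$ on a representative $u_1$. For each $\pi$ we have $\tau|_{\K{}}=\sigma$ with $\sigma=\sigma^{u_1}$, so Theorem~\ref{Odd q dimension formula}(b) applies with multiplier $1$, yielding $\dim\pi^{\Si{n}}=F(n,2)$.

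It then remains to match $F(n,2)$ against the claimed closed form. For $n\leq 3$ the values are immediate from the piecewise definition of $F$. For $n\geq 4$, substituting $q=2$ into the definition of $F$ gives $F(n,2) = 2\lfloor(3n^2+1)/4\rfloor - 10n + 19$. A parity check shows $3n^2+1\equiv 0\pmod 4$ when $n$ is odd and $3n^2+1\equiv 1\pmod 4$ when $n$ is even, so $2\lfloor(3n^2+1)/4\rfloor$ equals $(3n^2+1)/2$ in the first case and $3n^2/2$ in the second. Combining with $-10n+19$ and comparing to $\lfloor(3n^2-20n+39)/2\rfloor$ (whose numerator is odd iff $n$ is even) verifies the identity in both cases.

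The only conceptual obstacle is confirming the identification $\sigma=\sigma^{u_1}$ and the count of two extensions; everything else reduces to substituting $q=2$ into the already-established formula and a short case-split on $n\pmod 2$.
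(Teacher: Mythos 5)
Your argument is correct and is essentially the only route: the corollary is a direct specialization of Theorem~\ref{Odd q dimension formula} to $q=2$, which is exactly what you do. The identification $\sigma=\sigma^{u_1}$ (automatic here since the cuspidal $\rho$ of $\GL_2(2)$ is unique and $w\in\GL_2(2)$ makes $\rho^w\cong\rho$ an inner twist), the count of two extensions via the order-$2$ quotient $N_G(\K{})/Z(G)\K{}$, and the substitution $F(n,2)=2\lfloor(3n^2+1)/4\rfloor-10n+19$ followed by the parity case-split all check out.
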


\section{The Atkin-Lehner involution}
In Table \ref{tab:table1} and in this section, we make the following abbreviations: \begin{eqnarray}
X_{k}&=& S(\varpi^{k}, 0,0),\nonumber \\
Y_{i,j,r}(u) &=& t_{i,j}S(0,\varpi^r u, \varpi^{2i+1+r}), \nonumber \\
Z_{i,j}(u) &=& t_{i,j}S(\varpi^{i+j-1},\varpi^{j-1} u, \varpi^{2i+j})\nonumber.
\end{eqnarray} 

The goal of this section is to compute the signature of the Atkin-Lehner involution acting on $\pi^{\Si{n}}$. We assume below that $\omega_\sigma=1$ so that we can have $f\in \pi^{\Si{n}}\neq 0$. %Note that some situations below will require $n$ to be odd while others will require $n$ to be even. 
 
 \begin{lemma}\label{first AL lemma}
 Suppose $f\in \pi^{\Si{n}}$, and let $u_n\in N_G(\Si{n})$ denote the Atkin-Lehner element. Then the following equalities hold:
 
 a) If $g=t_{i,j}$ represents a type I double coset then $$(\pi(u_n)f)(t_{i,j})=\tau(u_1)f(t_{i,n-2i-j} ).$$ In particular, $[g u_n]=[g]$ if and only if $n$ is odd and $n=2i+2j+1$. If $n$ is odd then there are $\frac{n-1}{2}$ double cosets of type I fixed by $u_n$. 
 
 b) If $g=t_{i,j}X_k$ represents a type II double coset (so $q$ is even and $k=i+j$) then $$(\pi(u_n)f)(t_{i,j}X_k)=f(t_{i,j+n-2k}X_{n-k}).$$ In particular, $[g]=[gu_n]$ if and only if $n$ is even and $n=2k$. If $n\geq 4$ is even then there are $\frac{(n-2)}{2}$ double cosets of type II fixed by $u_n$. 
 
 c) If $g=Y_{i,j,r}(u)$ represents a type III double coset (so $q$ is even and $1\leq j-r\leq 2$) then $$(\pi(u_n)f)(Y_{i,j,r}(u))=\tau(u_1)\tau\left( \begin{bsmallmatrix}
 & u \\ 1 & 
 \end{bsmallmatrix}, \begin{bsmallmatrix}
 & 1 \\ u  & 
 \end{bsmallmatrix}  \right)f(Y_{i,j+n-2r-2i-1, n-r-2i-1}(u)).$$ In particular, $[g]=[gu_n]$ if and only if $n$ is odd and $n=2i+2r+1$. If $n\geq 3$ is odd there are $\frac{ n-3 }{2}$ double cosets of type IIIa fixed by $u_n$, and $\begin{cases}
 0 & \text{ if } n\leq 3 \\
 q\left(\frac{n-5}{2}\right) & \text{ if } n\geq 5 
 \end{cases}$ double cosets of type IIIb fixed by $u_n$. 
  % \begin{cases}
  %0 & \text{ if } n\leq  4e+1  \\
 % q^e\left( \frac{n-1-4e}{2} \right)  & \text{ if }  n\geq  4e+3
  %\end{cases}$$ 
 
 %$$q^M\left( \frac{ n+1-4M }{2(q-1)}+\frac{2}{(q-1)^2}  \right)-\left( \frac{ n+1 }{2(q-1)}+\frac{2}{(q-1)^2}  \right)$$ 

  d) If $g=Z_{i,j}(u)$ represents a type IV double coset (so $q$ is even) then $$(\pi(u_n)f)(Z_{i,j}(u))=f(Z_{i,n-2i-j+2}(u)).$$ In particular, $[g]=[gu_n]$ if and only if $n$ is even and $n=2i+2j-2$. If $n\geq 4$ is even there are $   (q-1)(\frac{n-4}{2})$ double cosets of type IV fixed by $u_n$.

 \end{lemma}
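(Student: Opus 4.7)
The plan is to treat each of the four types of double coset representative in sequence, and for each type to carry out the two-step programme: (i) compute $gu_n$ as an explicit matrix, then factor it as $h\cdot g'\cdot s$ with $h\in N_G(\K{})$, $g'$ a standard representative on our list, and $s\in\Si{n}$; (ii) use the transformation law $f(hg's)=\tau(h)f(g')$ together with $\omega_\tau=1$ and $\Si{n}$-invariance to read off the claimed formula. The fixed-point counts then drop out by imposing $g'=g$ (up to the ambiguity parametrizing the double coset) and solving the resulting congruences on $(i,j)$, $(i,j,r)$, or $(i,j,u)$.

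The key move in step (i) is to conjugate: write $gu_n=(gu_ng^{-1})\cdot g$, and recognize $gu_ng^{-1}$ as the product of a central element and an element of $\K{}$ (which occurs when $g'=g$ and $h\in Z\K{}$), or as the product of a central element, $u_1$, and an element of $\K{}$ (which occurs when $h\in Zu_1\K{}$). For type~I, a direct computation using $u_n=u_1\cdot\mathrm{diag}(\varpi^{n-1},\varpi^{n-1},1,1)$ yields $t_{i,j}u_n=\varpi^{i+j}\,u_1\,t_{i,n-1-2i-j}$, so $h=\varpi^{i+j}u_1$ and the formula collapses to $\tau(u_1)f(t_{i,\cdot})$. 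For type~II one checks $(t_{i,j}X_{i+j})u_n(t_{i,j}X_{i+j})^{-1}$ is central times an element $M\in\K{}$ whose image in $\K{}/\K{}^+\cong\GL_{2,2}(q)$ equals $(-I_2,I_2)$; since $q$ is even, $-I_2=I_2$, so $\tau(M)=1$ and no extra factor appears. For type~III, after performing the conjugation one finds $h\in Zu_1\K{}$; its $\K{}/\K{}^+$-image is $\bigl(\bigl[\begin{smallmatrix} & u\\ 1 & \end{smallmatrix}\bigr],\bigl[\begin{smallmatrix} & 1\\ u & \end{smallmatrix}\bigr]\bigr)$, producing the stated $\tau(u_1)\tau(\cdots)$ prefactor. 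For type~IV, the analogue of the type~II analysis produces $h\in Z\K{}$ acting trivially, and no $\tau$-factor appears.

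For step (ii), once the map $g\mapsto g'$ on parameters is established, a double coset is fixed by $u_n$ exactly when the parameters coincide. For type~I, $(i,j)\mapsto(i,n-1-2i-j)$ is fixed iff $n=2i+2j+1$ (forcing $n$ odd); enumerating $i\in\{0,\ldots,(n-3)/2\}$ yields $(n-1)/2$ fixed cosets. Analogous calculations: type~II gives $(i,j)\mapsto(i,n-2i-j)$, fixed iff $n=2(i+j)$ even, $(n-2)/2$ fixed; type~III gives $r\mapsto n-r-2i-1$, fixed iff $n=2i+2r+1$ odd, with $u$ preserved, so we multiply by the number of admissible $u$'s ($1$ for IIIa, $q$ for IIIb after accounting for the $\OF^{\times 2}(1+\p^2)$-quotient); type~IV gives $j\mapsto n-2i-j+2$, fixed iff $n=2i+2j-2$ even, with the $q-1$ distinct residues of $u$ all preserved. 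In each case the enumeration over admissible $(i,j)$ proceeds as in Lemmas \ref{diagonal double coset count}--\ref{type IVb count}.

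The principal obstacle is step (i) for types III and IV, where $Y_{i,j,r}(u)$ and $Z_{i,j}(u)$ have enough nonzero entries that the bare matrix computation of $g u_n g^{-1}$ is cumbersome, and the resulting $\K{}$-factor must be carefully tracked modulo $\K{}^+$ in order to extract its image in $\GL_{2,2}(q)$. The $u$-dependence in type III is particularly delicate: the claim that the appropriate $\K{}/\K{}^+$-image is $\bigl(\bigl[\begin{smallmatrix} & u\\ 1 & \end{smallmatrix}\bigr],\bigl[\begin{smallmatrix} & 1\\ u & \end{smallmatrix}\bigr]\bigr)$ has to be read off the explicit conjugation, and a further $\Si{n}$-adjustment may be needed to bring the product into the precise form $Y_{i,j+n-2r-2i-1,n-r-2i-1}(u)$. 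Once this bookkeeping is done, the fixed-point counts reduce to the same sorts of sums as in Section 4, carried out with shifted ranges.
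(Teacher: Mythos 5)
Your proposal is correct and follows essentially the same approach as the paper: for each type you establish an explicit matrix identity of the form $gu_n = h\,g'\,s$ with $h\in N_G(\K{})$ (either $Z\K{}$ or $Zu_1\K{}$), $g'$ a standard representative with shifted parameters, and $s\in\Si{n}$, then read off the formula from $f(hg's)=\tau(h)f(g')$; the fixed-point counts follow from solving $g'=g$ on the parameter ranges from Section~4. Your type~I identity $t_{i,j}u_n=\varpi^{i+j}u_1t_{i,n-2i-j-1}$ is the paper's, and your claims about the $\K{}/\K{}^+$-images of the $\K{}$-factors (trivial up to sign for types~II and~IV, and $\bigl(\bigl[\begin{smallmatrix}&u\\1&\end{smallmatrix}\bigr],\bigl[\begin{smallmatrix}&1\\u&\end{smallmatrix}\bigr]\bigr)$ for type~III) agree with what the paper's explicit identities produce. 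One small caution: the opening framing ``write $gu_n=(gu_ng^{-1})\cdot g$'' is only literally a conjugation when $g'=g$; in general $gu_ng^{-1}\notin N_G(\K{})$, and one must do the genuine factorization $gu_n=hg's$ as you in fact do for type~I. Since you signal this with the parenthetical remarks and then proceed correctly, this is a matter of phrasing rather than a gap.
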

 
 \begin{proof}
 a) The first claims follow from the identity $$t_{i,j}u_n = \varpi^{i+j}u_1t_{i, n-2i-j-1} .$$
 %\in N_G(\K{})t_{i, n-2i-j-1}$$
Assuming $n$ is odd, one then has the $\frac{n-1}{2}$ double cosets $[t_{i, \frac{n-1}{2}-i}]$, with $0\leq i\leq \frac{n-3}{2}$.

b) The first claims follow from the identity $$t_{i,j}X_k u_n = \varpi^k\begin{bsmallmatrix}
-1&  & \varpi^{i+j-k} & \\
& 1 & &  -\varpi^{i+j-k}\\
& & 1 & \\
& & & -1
\end{bsmallmatrix}t_{i,j+n-2k}X_{n-k}.$$ Assuming $n$ is even, one then has the fixed double cosets $[t_{i,j}X_{i+j}]$ where $j=n/2-i\geq 2$ (since $i+j\geq i+1+\frac{j}{2}$ by Lemma \ref{keyLemma}) so $0\leq i\leq \frac{n-4}{2}$ and the count is $\frac{(n-2)}{2}$ as claimed. 

%we have $[t_{i,j}X_k u_n] = [t_{i, j+n-2k} X_{n-k}]$, and $[t_{i,j}X_ku_n]=[t_{i,j}X_k]$ if and only if $n=2k$, in which case $$f(t_{i,j}X_ku_n) = \tau\left( \begin{bmatrix}
%-1 & & & \\
%& 1 & & \\
% %& & & -1
%\end{bmatrix}  \right)f(t_{i,j}X_k)=f(t_{i,j}X_k)$$ since $q$ is even. 
%Recall $\dim\tau^{R_{t_{i,j}X_k }}$ being 2 or 1, depending on $\sigma\neq \sigma^{u_1}$ and $\sigma=\sigma^{u_1}$, respectively. 

c) Let $h = \begin{bsmallmatrix}
 &   &  &  \varpi^{-1}\\
&  & 1 &  \\
& 1 & -\frac{\varpi^{2i+j+1}}{z} & \\
\varpi & & & -\frac{\varpi^j}{y}
\end{bsmallmatrix}  \begin{bsmallmatrix}
\frac{\varpi^{n+j-1}}{y^2} & & & \\
& \frac{\varpi^{n+i+j}}{yz} & & \\
& & \frac{z}{y\varpi^{i+1}} & \\
 & & & 1
\end{bsmallmatrix}$ and observe the identity $$t_{i,j}S(0,y,z)u_n = \varpi^{i}y u_1h S\left(0, \frac{\varpi^n }{z},\frac{\varpi^n }{y}\right)\begin{bsmallmatrix}
1 & & & \\
 & 1 & & \\
& & -1 & \\
 & &  & -1
\end{bsmallmatrix}.$$ Next note that if $y=u\varpi^r$ and $z=\varpi^{2i+1+r}$, with $u\in \OF^\times$, then  $$S\left(0, \frac{\varpi^n}{z}, \frac{\varpi^n}{y}\right) = \begin{bsmallmatrix}
1 & & & \\
& 1  & & \\
 & & u^{-1} & \\
 & & & u^{-1}
\end{bsmallmatrix}S(0, u\varpi^{n-2i-1-r}, \varpi^{n-r})\begin{bsmallmatrix}
1 & & & \\
& 1 & & \\
 & & u & \\
 & & & u
\end{bsmallmatrix}.$$ Noting that $h$ introduces a factor of $\begin{bsmallmatrix}
u^{-2} & & & \\
& u^{-1} & & \\
& & u^{-1} & \\
& & & 1
\end{bsmallmatrix}$, the first two claims follow. 
%Thus $$f(g u_n)  = \tau(u_1\begin{bmatrix}
%& & & r\varpi^{-1}\\
%& & 1 & \\
%& r & & \\
%\varpi & & & 
%\end{bmatrix})  f(g)= \tau(u_1)\tau( \begin{bmatrix}
%& r \\1 &  
%\end{bmatrix}, \begin{bmatrix}
%& 1 \\ r &  
%\end{bmatrix}  )f(g).$$ 

%In order to have $[gu_n]=[g]$ we must have $\val(yz)=n$, and since $\val(z)=2i+1+\val(y)$, this is the same as $\val(y) = \frac{n-1-2i}{2}$. Thus $n$ must be odd. Write $z=\varpi^{2i+1+k}=\varpi^{n-k}$ and $y=r \varpi^k$ for some $r\in \OF^\times$.  So we see that the condition on $\val(y)$ is also sufficient to ensure $[gu_n]=[g]$. 

To count the number of double cosets of type IIIa fixed by $u_n$, note that $r=j-1$ and $i=(n+1)/2-j$. The reader may verify that the relevant inequalities are $3\leq j\leq \frac{n+1}{2}$. Hence the count is $\frac{n-3}{2}$. To count the number of double cosets of type IIIb fixed by $u_n$, since $r=j-2$ and $i=(n+3)/2-j$, the relevant inequalities are $5\leq j\leq\frac{n+3}{2}$. There are $q$ double cosets for each $j$, as seen in the proof of Lemma \ref{type IIIb count}. Hence the count is as claimed.

%If $2k=n-1-2i$, then we have $1\leq j-k = i+j - \frac{n-1}{2}\leq 2e$, so $$\frac{n+1}{2}\leq i+j\leq \frac{n-1+4e}{2}.$$ If, further, $j-k=2e$ then $n-2\geq 2i+j = i + 2e + \frac{n-1}{2}$ so $0\leq i\leq \frac{n-3-4e}{2}$. We conclude that there are $q^e\left( \frac{n-1-4e}{2}\right)$ double cosets if $n\geq 4e+3$, while there are zero double cosets if $n\leq 4e+1$ (note $n$ is odd). If instead $j-k<2e$ then $j-k = 2m+1$ for some $0\leq m \leq e-1$. We have $n-2\geq 2i+j = i + 2m + \frac{n+1}{2}$ so $0\leq i\leq \frac{n-5}{2}-2m$, hence $m\leq \lfloor \frac{n-5}{4}\rfloor$. Letting $N = \min(e-1, \lfloor \frac{n-5}{4}\rfloor)\geq 0$, the number of double cosets is $\sum\limits_{m=0}^{N} \sum\limits_{i=0}^{\frac{n-5}{2}-2m}q^{m} =\sum\limits_{m=0}^{N} (\frac{n-3}{2}-2m )q^{m}= \frac{n-3}{2}\cdot \frac{q^{N+1}-1}{q-1}-2q\frac{Nq^{N+1}-Nq^N - q^N + 1}{(q-1)^2}=:d(n)=:d(n,q,e).$ Note if $n=1$, $n=3$, or $e=0$ then $N=-1$ and this expression is zero, as it should be. Note also that $N = e-1$ if and only if $n\geq 4e+1$, or $\frac{n-1}{2}\geq 2e$. 

d)   Let $\lambda = 1-\frac{yz}{x^2}$, 
$h = \begin{bsmallmatrix}
\frac{\varpi^{2i+j+n}}{x^2} & & & \\
& \frac{\varpi^{i+j+n}}{x^2\lambda} & & \\
& &\lambda \varpi^i & \\
 & & & 1
\end{bsmallmatrix}$, 
$B=\begin{bsmallmatrix}
1 & & & \\
\frac{-z}{\lambda\varpi^ix} & -1 & & \\
& & \lambda^{-1} & \\
& & \frac{ z}{\lambda^2\varpi^i x} & -\lambda^{-1}
\end{bsmallmatrix}$ and observe the identity $$t_{i,j}S(x,y,z)u_n = x\begin{bsmallmatrix}
-1 &  \frac{\varpi^i y}{x} & \frac{\varpi^{i+j}}{x\lambda}& \\
& -1 & \frac{-\varpi^j z}{x^2\lambda} & \frac{\varpi^{i+j}}{x} \\
& & 1 & \frac{\varpi^i y}{x}\\
& & & 1
\end{bsmallmatrix} Bh S\left(\frac{\varpi^n}{x}, \frac{\varpi^n y}{x^2},\frac{\varpi^n z}{x^2}\right)\begin{bsmallmatrix}
1 & & & \\
 & 1 & & \\
& & \lambda & \\
 & &  & \lambda
\end{bsmallmatrix}.$$ Noting that $\val(x)=i+\val(y) =\val(z)-i-1$ and $\val(\frac{yz}{x^2}) =  1$, the identity shows the first two claims. 

To count the number of double cosets of type IV fixed by $u_n$, one argues in a manner similar to the previous cases. 
 \end{proof}

 \begin{theorem}\label{AL signature formula}
Let $\pi = \cInd_{N_G(\K{})}^G(\tau) $ be an irreducible depth zero supercuspidal representation with trivial central character. Let $\sigma$ be an irreducible constituent of $\tau|_{\K{}}$ and assume $\omega_\sigma=1$. Let $s(\pi, n)$ denote the signature of the Atkin-Lehner involution on $\pi^{\Si{n}}$. 

a) Suppose $n\geq 4$ is even. If $q$ is odd then $s(\pi, n)=0$. If $q$ is even then $$s(\pi, n) =\left(1+\frac{q(n-4)}{2} \right)\cdot \begin{cases}
 2 & \text{ if } \sigma\neq \sigma^{u_1} \\
 1 & \text{ if } \sigma=\sigma^{u_1} 
\end{cases}$$
     
   %  $$F(n,q,e)=\begin{cases}
     %    \frac{n(n-2)}{8}+ (q-1) (q^{\left\lfloor \frac{n-4}{4} \right\rfloor }a_q(n-4\left\lfloor \frac{n-4}{4} \right\rfloor)-a_q(n))  & \text{ if } n\leq  2e+2  \\
     %    q^{\left\lfloor \frac{n}{4}\right\rfloor}\left( n+1-4\left\lfloor \frac{n}{4}\right\rfloor+\frac{4}{q-1}  \right)  -\frac{4}{q-1}-n-1+(q-2)\frac{e(e-n+1)}{2} & \text{ if }  2e+4\leq n\leq  4e
     %     \\
      %         q^{e}\left( n-4e+1+\frac{4}{q-1}  \right) -n-1-\frac{4}{q-1}+(q-2)\frac{e(e-n+1)}{2}& \text{ if }  4e+2\leq  n
      %    \end{cases}$$ 

b) Suppose $n\geq 3$ is odd. If $q$ is even then $$s(\pi, n) = \pm \begin{cases}
0 & \text{ if } \sigma\neq \sigma^{u_1}\\
1 & \text{ if } n=3 \text{ and } \sigma = \sigma^{u_1}\\
1+q(n-4) & \text{ if } n\geq 5\text{ and } \sigma = \sigma^{u_1}
\end{cases}$$ while if $q$ is odd then $$s(\pi, n) =\pm \frac{(n-1)}{2}\begin{cases} 0 & \text{ if } \sigma\neq \sigma^{u_1} \\
2 & \text{ if } \sigma=\lambda [\rho\boxtimes \rho] , \ \lambda \omega_\rho= 1 \\
0 & \text{ if } \sigma=\lambda [\rho\boxtimes \rho] , \ \lambda  \omega_\rho=\lambda_0 \\
1 & \text{ if } \sigma\subsetneq \lambda [\rho\boxtimes \rho] 
  \end{cases}.$$  
Here the $\pm$ sign is determined by the choice of extension $\tau(u_1)$ acting on $\sigma=\sigma^{u_1}$.  %by either $\pm\text{swap}$ composed with $(w, w)$. 

\end{theorem}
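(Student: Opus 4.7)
The plan is to decompose $\pi^{\Si{n}}=\bigoplus_{[g]\in \Supp(\pi)}\tau^{R_g}$ and to exploit the fact that $\pi(u_n)$ permutes the double cosets in $\Supp(\pi)$ via $[g]\mapsto [gu_n]$. When $[g]\neq [gu_n]$, the operator interchanges two isomorphic summands and contributes $0$ to the trace; hence only the $u_n$-fixed cosets contribute, and these---together with their counts---are identified in Lemma \ref{first AL lemma}.

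For each fixed double coset, write $gu_n=mgs$ with $m\in N_G(\K{})$ and $s\in \Si{n}$, so that $\pi(u_n)|_{\tau^{R_g}}$ acts as $\tau(m)$. The explicit factorizations in Lemma \ref{first AL lemma} place $m$ in $u_1\K{}\cdot Z(G)$ for types I and III, and in $\K{}\cdot Z(G)$ for types II and IV; the central factor acts trivially because $\omega_\sigma=1$. When $\sigma\neq \sigma^{u_1}$, the factor of $u_1$ present for types I and III interchanges $\sigma$ and $\sigma^{u_1}$ inside $\tau|_{\K{}}=\sigma\oplus\sigma^{u_1}$, forcing zero trace (this is exactly the last lemma of Section 3.1); for types II and IV the image of $m$ in $\GL_{2,2}(q)$ is central (in characteristic $2$ one uses $-1\equiv 1$), so $\tau(m)$ acts as the identity on $\tau^{R_g}$ and contributes $\dim\tau^{R_g}=2$. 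When $\sigma=\sigma^{u_1}$ the extension $\tau(u_1)$ is determined only up to a sign---producing the $\pm$ in the statement---and Lemma \ref{swap acts trivially on all fixed spaces} supplies the trace of the swap composed with conjugation by $(w,w)$ on each $\sigma^{R_g}$, giving the per-coset contributions in all three cases of $\sigma$ appearing in the theorem.

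Summing these per-coset contributions, weighted by the fixed-coset counts from Lemma \ref{first AL lemma}, yields the stated formulas after elementary algebra. In part a), types II and IV contribute $\frac{n-2}{2}$ and $(q-1)\frac{n-4}{2}$, summing to $1+\frac{q(n-4)}{2}$ (doubled when $\sigma\neq\sigma^{u_1}$); for $q$ odd neither type occurs, so $s(\pi,n)=0$. In part b) with $q$ even, types I, IIIa, and IIIb contribute $\frac{n-1}{2}$, $(q-1)\frac{n-3}{2}$, and $q\cdot\frac{n-5}{2}$ respectively, totaling $1+q(n-4)$ for $n\geq 5$ and $1$ for $n=3$; for $q$ odd only type I survives and is scaled by the three-case factor from Lemma \ref{swap acts trivially on all fixed spaces}(c). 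The main obstacle will be verifying, for each fixed-coset factorization in Lemma \ref{first AL lemma}, that the reduction of $m$ modulo $\K{}^+$ is precisely the element of $\GL_{2,2}(q)$ whose trace is computed in Lemma \ref{swap acts trivially on all fixed spaces}; the diagonal conjugations used to normalize $R_g$ in the proof of Lemma \ref{keyLemma} must be threaded through consistently, which is most delicate for types IIIb and IV, where the parameter $u$ and the matrix $h$ from part d) of Lemma \ref{first AL lemma} couple the two $\GL_2$ components non-trivially.
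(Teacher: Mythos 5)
Your proposal reproduces the paper's argument essentially line by line: decompose $\pi^{\Si{n}}$ over $\Supp(\pi)$, observe that $\pi(u_n)$ permutes double cosets so only the $u_n$-fixed ones contribute to the trace, use the explicit factorizations and fixed-coset counts of Lemma \ref{first AL lemma}, note that for types II and IV the reduction of $m$ lies in $\K{}$ (and is trivial mod $\K{}^+$ since $q$ is even), use the last lemma of Section 3.1 to kill the contribution when $\sigma\neq\sigma^{u_1}$ and a $u_1$-factor appears (types I and III), and invoke Lemma \ref{swap acts trivially on all fixed spaces} when $\sigma=\sigma^{u_1}$; the final arithmetic $\frac{n-2}{2}+(q-1)\frac{n-4}{2}=1+\frac{q(n-4)}{2}$ and $\frac{n-1}{2}+(q-1)\frac{n-3}{2}+q\frac{n-5}{2}=1+q(n-4)$ matches. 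Your closing caveat about threading the normalizing conjugations from Lemma \ref{keyLemma} through the factorizations in Lemma \ref{first AL lemma} identifies the same bookkeeping the paper compresses into ``we may combine \dots parts a) and b), together with the formulas in Lemma \ref{first AL lemma} parts a) and c)''; it is not a gap in your argument, merely an honest flag of where the details live.
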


\begin{proof}
a) The results of Lemma \ref{first AL lemma} show that if $[gu_n]=[g]$ then $f(gu_n)=f(g)$ since $g$ is of type II or IV. In particular, $q$ is even. Note both of these types have $\dim\sigma^{R_g}=1$ by lemma \ref{character table computation lemma}. The number of fixed double cosets was shown in Lemma \ref{first AL lemma} to be $\frac{n-2}{2}+(q-1)(\frac{n-4}{2}) = 1+\frac{q(n-4)}{2}$. The conclusion follows. 

b) The results of Lemma \ref{first AL lemma} show that if $[gu_n]=[g]$ then $g$ is of type I or III and $f(gu_n)=\tau(s)f(g)$ for some element $s$ of order $2$, with $s\in u_1\K{}$. Hence the contribution of the double coset $[g]=[gu_n]$ is equal to the trace of $\tau(s)$ on $\tau^{R_g}$. If $\sigma\neq \sigma^{u_1}$, the formula for the induced character shows that the trace of $\tau(s)$ on $\tau^{R_g} = \sigma^{R_g}\oplus \sigma{^*}^{R_g}$ is zero. Suppose now that $\sigma= \sigma^{u_1}$, so $\tau|_{\K{}}=\sigma$, and there are two choies of $\tau$ for a given $\sigma$, distinguished by $\tau(u_1)$. 

If $q$ is even then we may combine Lemma \ref{swap acts trivially on all fixed spaces} parts a) and b), together with the formulas in Lemma \ref{first AL lemma} parts a) and c), to conclude that $\tau(s)$ acts trivially on $\sigma^{R_g}$. Hence the signature is the number of fixed double cosets, weighted by $\dim \sigma^{R_g}$, which is thus $1$ if $n=3$ and, if $n\geq 5$, is  $\frac{n-1}{2}\left(1\right)+\frac{n-3}{2}(q-1)+q\left(\frac{n-5}{2}\right)(1) = 1+q(n-4)$. 

Finally, suppose $q$ is odd. In this case $s=u_1$. If $\sigma =\lambda [\rho\boxtimes \rho]$ then $\dim \sigma^{R_g}=2$ and we may apply Lemma \ref{swap acts trivially on all fixed spaces} part c), since in this case $s=u_1$ and one choice of $\tau(u_1)$ acts on $\sigma^{R_g}$ by a composition of the swapping map $s$ and $\sigma(w,w)$ where $w=\begin{bsmallmatrix} & 1 \\ -1 & 
\end{bsmallmatrix}$. If $\sigma \subsetneq \lambda [\rho\boxtimes \rho]$, then $\dim \sigma^{R_g}=1$, so $\tau(u_1)f(g)=\pm f(g)$ is automatic. The conclusion then follows from Lemma \ref{first AL lemma} part a). 
\end{proof}

%We showed above that, in all cases, $\sigma^{R_g}$ is fixed by the swap automorphism. This handles all double cosets except those of type I, on which the trace of $\tau(u_1)$ on $\tau^{R_g}=\sigma^{R_g}$ is that of $\tau(w,w)$, where $w=\begin{bsmallmatrix} & 1 \\ -1 & 
%\end{bsmallmatrix}$. We showed this action is trivial unless $q$ is odd, $\sigma = \lambda_0 \omega_\rho^{-1}[\rho\boxtimes \rho]$, in which case its trace is $2(-1)^{\frac{q-3}{2}}$. The rest now follows. 

\begin{corollary}
Suppose $F = \Q_2$, $\sigma:\GL(2, \mathbb{F}_2)^2\to \C^\times$ is the unique irreducible cuspidal representation and $\tau_\pm$ denotes the extensions to $N_G(\K{})$ by having $u_1$ act by $\pm1$ and $Z(G)$ acting trivially. Let $\pi_\pm = \cInd_{N_G(\K{})}^G(\tau_\pm)$. %Then $s(\pi_\pm, 2n+1) $ is equal to $\pm1$ times the number of double cosets preserved by $u_n$, since $\dim \sigma^{R_g}=1$ for all $g\in \Supp(\pi)$, while $s(\pi, 2n)$ is equal to the the number of double cosets preserved by $u_n$, for the same reason. 
Then $$s(\pi_\pm, 3)=\pm 1$$ and, for $n\geq 4$, we have $$s(\pi_\pm, n) =  \begin{cases} n-3  & \text{ if } n\equiv 0 \pmod 2 \\
 \pm \left( 2n-7  \right) & \text{ if } n\equiv 1\pmod 2
\end{cases}.$$
\end{corollary}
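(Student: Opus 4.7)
The plan is to obtain this as a direct specialization of Theorem \ref{AL signature formula} with $q=2$, after first checking that the hypotheses are met for the representation in question.

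First I would verify we sit in the branch $\sigma=\sigma^{u_1}$ of the theorem. Since $F=\Q_2$, we have $q=2$ and $\GL_2(\mathbb{F}_2) \cong S_3$ admits a unique irreducible cuspidal representation $\sigma_0$ (the $2$-dimensional one). Because $\det\colon \GL_2(\mathbb{F}_2)\to \mathbb{F}_2^\times$ is trivial, $\GL_{2,2}(2) = \GL_2(\mathbb{F}_2)^2$, and hence $\sigma = [\sigma_0 \boxtimes \sigma_0]= \sigma_0 \boxtimes \sigma_0$ is the unique cuspidal of $\GL_{2,2}(2)$. Since the swap-plus-conjugation action of $u_1$ preserves this single representation up to isomorphism, we have $\sigma = \sigma^{u_1}$. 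The hypothesis $\omega_\sigma = 1$ holds automatically: in characteristic $2$ we have $-1=1$, and $\sigma_0$ has trivial central character on the unique nontrivial scalar (which is $1$).

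Next I would substitute $q=2$ into Theorem \ref{AL signature formula}. For $n\geq 4$ even, part a) gives
\[
s(\pi_\pm, n) = \Big(1 + \tfrac{2(n-4)}{2}\Big)\cdot 1 = n-3.
\]
For $n=3$, part b) gives $s(\pi_\pm, 3)=\pm 1$ directly. For $n\geq 5$ odd, part b) gives
\[
s(\pi_\pm, n) = \pm\bigl(1 + 2(n-4)\bigr) = \pm(2n-7).
\]
The sign indeterminacy in Theorem \ref{AL signature formula} is resolved by the very definition of $\tau_\pm$: the extension has $\tau_\pm(u_1)$ acting by $\pm 1$ on $\sigma$, which pins down the overall sign in the odd-$n$ case.

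There is no real obstacle here: once one verifies the unique-cuspidal and central-character conditions, everything follows by arithmetic substitution into the general signature formula. The one mild thing to double-check is that the extensions $\tau_\pm$ in the corollary's statement correspond precisely to the two choices of $\tau(u_1)$ that Theorem \ref{AL signature formula} refers to, which is immediate from the construction of depth-zero supercuspidals recalled in Section~2.
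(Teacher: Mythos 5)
Your proposal is correct and follows exactly the approach the paper intends: the corollary is nothing but the substitution $q=2$ into Theorem~\ref{AL signature formula}, after noting that $\sigma=\sigma^{u_1}$ and $\omega_\sigma=1$ are automatic over $\mathbb{F}_2$ and that the $\pm$ in the theorem is tied to the two extensions $\tau_\pm$. One small factual slip, though it doesn't affect the argument: the unique irreducible cuspidal representation $\sigma_0$ of $\GL_2(\mathbb{F}_2)\cong S_3$ is \emph{one}-dimensional, not two-dimensional, since cuspidals of $\GL_2(q)$ have dimension $q-1=1$; it is the sign character, while the $2$-dimensional representation is the Steinberg. This is what makes $\sigma=\sigma_0\boxtimes\sigma_0$ a character $\GL_2(\mathbb{F}_2)^2\to\C^\times$ as in the statement. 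Everything else, including the arithmetic $1+\tfrac{2(n-4)}{2}=n-3$ for even $n\geq 4$ and $\pm(1+2(n-4))=\pm(2n-7)$ for odd $n\geq 5$, is right.
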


%\begin{landscape}
 \tabcolsep=0.08cm
\begin{table}[h!]
  \begin{center}
    \caption{Siegel fixed vector information, $q$ even, $n\geq 4$. If $n\leq 3$, only the first row applies. }
    \label{tab:table1}
    \begin{tabular}{c|c|c|c|c} % <-- Alignments: 1st column left, 2nd middle and 3rd right, with vertical lines in between
   $g$ & 
      $R_g$ & $\dim \sigma^{R_g}$  &   $|\{ g \}|$ &  AL\\
      \hline
      
 I  \    $t_{i,j}$  & $\left(\begin{bsmallmatrix}
      a & \\ & b 
       \end{bsmallmatrix}, \begin{bsmallmatrix}
             c & \\ & abc^{-1} 
              \end{bsmallmatrix}\right)$ & $1$ &  $\left\lfloor \frac{(n-1)^2}{4} \right\rfloor $  &   $t_{i, n-2i-j-1}$   \\

II  \   $t_{i,j}X_{i+j}$ & $\left(\begin{bsmallmatrix}
      a & \\ & b 
       \end{bsmallmatrix}, \begin{bsmallmatrix}
             c & \\ & abc^{-1} 
              \end{bsmallmatrix}\right)$  & $1$    & $\left\lfloor \frac{(n-2)^2}{4}\right\rfloor$ &  $t_{i, n-2i-j}X_{n-i-j} $  
                \\

IIIa  \  $Y_{i,j,j-1}(u)$  & $\left( \begin{bsmallmatrix}
       a & \\
      u  & a 
       \end{bsmallmatrix} , \begin{bsmallmatrix}
       a & \\
      u  & a
       \end{bsmallmatrix} \right) $ & $q-1$    &  $\left\lfloor \frac{(n-3)^2}{4} \right\rfloor $   &  $Y_{i,n-2i-j+1, n-2i-j}(u)$ 
       \\

IIIb \   $Y_{i,j,j-2}(u)$   & $ \left( \begin{bsmallmatrix}
              a & \\
             u+a(b^2+b)  & a 
              \end{bsmallmatrix} , \begin{bsmallmatrix}
              a & \\
             u  & a
              \end{bsmallmatrix} \right) $ & 1    & $q\left\lfloor \frac{(n-5)^2}{4} \right\rfloor $  & $Y_{i,n-2i-j+3, n-2i-j+1}(u)$ 
              \\

IV  \  $Z_{i,j}(u)$    &  $ \left( \begin{bsmallmatrix}
        a & \\
       u +a(b^2+b) & a 
        \end{bsmallmatrix} , \begin{bsmallmatrix}
        a & \\
       u  & a
        \end{bsmallmatrix} \right) $ & $1$ & $(q-1)\left\lfloor \frac{(n-4)^2}{4} \right\rfloor $    & $Z_{i,n-2i-j+2}(u)$
    \end{tabular}
  \end{center}
\end{table}

\clearpage

%\end{landscape}

\end{document}